\begin{document}

\title[Adjoint path-kernel method for backpropagation and data assimilation]{
Adjoint path-kernel method for backpropagation and data assimilation in unstable diffusions
}

\begin{abstract}
We derive the adjoint path-kernel method for computing parameter-gradients (linear responses) of SDEs.
Its cost is almost independent of the number of parameters, and it works for non-hyperbolic systems with parameter-controlled multiplicative noise.
With this new formula, we extend the conventional backpropagation method to settings with gradient explosion, and demonstrate it on the 40-dimensional Lorenz 96 system.

Moreover, we consider a difficult version of the 4D-Var data assimilation problem where
(1) the deterministic part of the model is chaotic, 
(2) the loss is a single long-time functional accounting for discrepancies in both the observations and the dynamics,
(3) some parameters in the dynamics are unknown, and 
(4) some coordinates of the states cannot be observed, and cannot be reasonably inferred from other coordinates within a short time.
We model the correction term at each time-step separately as a parameterized function of the random state.
With our new tool, we can run stochastic gradient descent to find the path and parameters that best match the low-dimensional observation data.
We demonstrate this on the 10D Lorenz-96 system with 8D observations.

\smallskip
\noindent \textbf{MSC2020 subject classification numbers.}
60H07, 
65D25, 
37M25, 
62D10. 

\smallskip
\noindent \textbf{Keywords.}
Linear response,
Backpropagation,
Likelihood ratio,
Data assimilation,
Gradient descent.
\end{abstract}

\maketitle

\section{Introduction}
\label{s:intro}

\subsection{Main results}

This paper rigorously derives the adjoint path-kernel formula for the parameter-gradient of discrete-time random dynamical systems in \Cref{t:adj_discrete}.
Its cost is independent of the number of parameters, so it is suitable for cases with many parameters.
Then we formally pass to the continuous-time limit in \Cref{t:adjSDE}.
We also formally derive the adjoint path-kernel formula for the parameter-gradient of stationary measures in \Cref{t:adjInf}.

\begin{restatable}[adjoint continuous-time path-kernel]{formula}{goldbach}
\label{t:adjSDE}
For any $x_0$, $v_0$, and adapted scalar process $\alpha_t$,
consider the Ito SDE,
\[
dx^\gamma_t
= F^\gamma(x^\gamma_t) dt 
+ \sigma^\gamma(x^\gamma_t)dB,
\quad
x_0^\gamma =  x_0 + \gamma v_0.
\]
Let $\nu_t$ be the backward covector process of the damped adjoint equation,
\begin{equation*}\begin{split} 
  - d\nu 
  = 
  - \alpha \nu dt 
  + \nabla F_k^T\nu dt +  \nabla \sigma (x) \nu^T dB
  + (\Phi(x_T)-\Phi^{avg}_T) \alpha_t dB / \sigma(x)
\end{split}\end{equation*}
with terminal condition $\nu_{T} = \nabla \Phi(x_T)$.
Then the linear response has the expression
\[
\delta \E{\Phi(x^\gamma_T)} 
= \E{\nu_0 \cdot v_0
+ 
\int_{t=0}^{T} \nu_t \cdot \left(\delta F(x) dt + \delta \sigma(x) dB\right)  }.
\]
\end{restatable}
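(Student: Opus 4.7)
The plan is to represent the linear response as the sum of a pathwise (backpropagation) estimator and a score-function (Girsanov) estimator, mixed by the adapted weight $\alpha_t$. The damping $-\alpha\nu\,dt$ in the backward SDE is what the pathwise side demands, while the extra $(\Phi(x_T)-\Phi^{avg}_T)\alpha_t\,dB/\sigma(x)$ term is the Malliavin weight that keeps the mixed estimator unbiased. Throughout I work formally: Ito's product rule applied term-by-term, with integrability left to case-specific hypotheses.

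First I would form the first-variation $v_t := \partial_\gamma x_t^\gamma|_{\gamma=0}$, which satisfies the linearised SDE
\[
dv_t = (\nabla F \cdot v_t + \delta F)\,dt + (\nabla\sigma \cdot v_t + \delta\sigma)\,dB_t,
\]
with the given $v_0$. In the undamped case $\alpha\equiv 0$, applying Ito's product rule to $\nu_t\cdot v_t$ with the undamped adjoint yields $\nabla\Phi(x_T)\cdot v_T = \nu_0\cdot v_0 + \int_0^T \nu_t\cdot(\delta F\,dt + \delta\sigma\,dB_t)$, and taking expectation already recovers the claim in that special case. For general $\alpha$ the same computation leaves two residuals after integrating from $0$ to $T$: a drift residual $\int_0^T \alpha_t\,\nu_t\cdot v_t\,dt$ coming from the damping, and an Ito cross-product from the new $dB$ coefficient in $d\nu$ contracted against the $(\nabla\sigma\cdot v_t+\delta\sigma)\,dB_t$ term in $dv_t$. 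The goal reduces to showing these two residuals cancel in expectation.

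To produce the cancellation I would invoke a Girsanov shift of the Brownian drift by $\alpha_t v_t/\sigma(x_t)$; the $\gamma$-derivative of the resulting likelihood ratio is a score $\int_0^T \alpha_t v_t\,dB_t/\sigma(x_t)$, and a Clark-Ocone-type representation of $\Phi(x_T)$ via $\nu_t$ supplies the identity $\E{(\Phi(x_T)-\Phi^{avg}_T)\int_0^T \alpha_t v_t\,dB_t/\sigma(x_t)} = \E{\int_0^T \alpha_t\,\nu_t\cdot v_t\,dt}$, which is precisely what the adjoint equation was designed to encode; the subtraction of the deterministic $\Phi^{avg}_T$ is a mean-zero control variate that does not bias the estimator. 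The main obstacle is this last step, which couples the forward-measurable $\Phi(x_T)$ to the backward adjoint $\nu_t$ and in a rigorous proof would require Malliavin-calculus estimates together with growth bounds on $\nu$ and $v$ despite the absence of hyperbolicity. Since the theorem is stated as formal, I would present the step as a direct Ito-level cancellation and invoke the already-established discrete-time identity \Cref{t:adj_discrete}, which gives the analogous finite-difference statement, as justification for passing term-by-term to the continuous-time limit.
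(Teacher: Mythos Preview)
Your route differs substantially from the paper's and is more involved than needed. The paper never computes an Ito product at the continuous level: it discretizes the SDE with step $\Delta t$, matches $f(x)=x+F(x)\Delta t$, $\sigma'=\sigma\sqrt{\Delta t}$, $b_n=\Delta B_n/\sqrt{\Delta t}$, $\alpha'_n=\alpha_n\Delta t$ to the quantities in \Cref{t:adj_discrete}, applies that theorem verbatim, writes out the resulting backward recursion for $\nu_k$ and the response sum, and then formally lets $\Delta t\to0$. That is the entire argument; your last sentence already contains it, but you treat it as an afterthought rather than as the proof.

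Your main line---Ito's product rule on $\nu_t\cdot v_t$ followed by a Girsanov/Clark--Ocone cancellation of residuals---has a structural gap you do not address: $\nu_t$ is a backward, anticipating process (its coefficients involve $\Phi(x_T)$, and it is defined only as the formal limit of a recursion running in $k\downarrow$), so the forward Ito product rule is not available; the $dB$-terms in $-d\nu$ and the cross-variation $d[\nu,v]$ would require a time-reversed or Skorohod interpretation that you do not supply. Even granting a purely formal reading, the score identity you want for the residual, $\E{(\Phi(x_T)-\Phi^{avg}_T)\int_0^T \alpha_t v_t\cdot dB_t/\sigma(x_t)}=\E{\int_0^T\alpha_t\,\nu_t\cdot v_t\,dt}$, is essentially the tangent formula \Cref{t:tan_sde} rewritten, and the paper obtains that too by discretization rather than Clark--Ocone. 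The discretize-then-limit route buys all of this for free, since at the discrete level every manipulation is an exact finite sum and the tangent--adjoint duality is elementary linear algebra.
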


Here $\delta(\cdot):= \partial (\cdot)/\partial \gamma |_{\gamma=0}$,
$\Phi^{avg}_T:=\E{\Phi(x^{\gamma=0}_T)}$,
$B$ is the Brownian motion, the SDE is Ito, and the integrations of backward processes are the limits of \Cref{e:liu,e:zhao}.
Similarly to the tangent version in \cite{dud}, the adjoint version here has the following advantages: (1) $\sigma$ can depend on $x$ and $\gamma$; (2) $\nu$ does not grow exponentially over time since $\alpha$ damps Lyapunov exponents; (3) it does not assume hyperbolicity.

The main advantage compared to \cite{dud} is that the cost is almost independent of the number of parameters.
This is because when we have multiple parameters $\gamma$, such as in the case of neural networks, the derivative with each parameter uses the same main term $\nu$, so it only needs to be computed once.
These new formulas enable Monte-Carlo-type computation of the parameter-gradient of unstable diffusions in high dimensions.
For example, we use it to compute the parameter-gradient of the Lorenz-96 system with multiplicative noise, which cannot be solved by previous algorithms.

With the new adjoint path-kernel tool, we develop a new framework for a particularly difficult variant of 4D-Var data assimilation problems. 
Our setting combines several sources of hardness: 
(1) the deterministic part of the dynamics is unstable/chaotic, 
(2) the loss is a single function of the entire space-time path over a long window, 
(3) some parameters in the dynamics are unknown, and 
(4) only a low-dimensional projection of the state is observed, with unobserved coordinates that cannot be reliably inferred from a short time segment.
On top of this, we model the correction term at each time step as a parameterized (but non-random) function of the random state, so the overall control consists of many parameters describing these corrections along the trajectory; our tangent path-kernel method in \cite{dud} becomes too expensive for this case.

Our adjoint path-kernel method provides a stable way to differentiate the long-time loss with respect to the initial condition of the latent path, all parameters in the dynamics (including the diffusion), and corrections on all time steps.
Hence, we can run stochastic gradient descent even in this high-dimensional, strongly chaotic, partially observed regime. 
As a proof of concept, we demonstrate that this approach successfully recovers both the path and dynamics in the three-dimensional Lorenz-63 system from only two observed coordinates, showing that the method can handle long-horizon, pathwise learning problems that are difficult for standard 4D-Var or filtering formulations.

\subsection{Literature review}

\subsubsection{Linear response methods}

The averaged statistic of a random dynamical system is of central interest in applied sciences.
It is a fundamental tool for many applications in statistics and computing.
There are three basic methods for expressing and computing derivatives of the marginal or stationary distributions of random systems: the path-perturbation method (shortened as the path method), the divergence method, and the kernel-differentiation method (shortened as the kernel method).
These methods can be used for derivatives with respect to the terminal conditions, initial conditions, and parameters of dynamics (known as the linear response).
The relation and difference among the three basic methods can be illustrated in a one-step system, which is explained in \cite{Ni_kd}.

The path-perturbation method is also known as the ensemble method or the stochastic gradient method \cite{eyink2004ruelle,lucarini_linear_response_climate}.
It also includes the backpropagation method, which is the basic algorithm for machine learning.
It is good at stable systems and derivatives on initial conditions.
However, it is expensive for chaotic or unstable systems; the workaround is to artificially reduce the size of the path-perturbation, such as shadowing or clipping methods \cite{Ni_nilsas,Ni_NILSS_JCP,clip_gradients2,FP25}, but they all introduce systematic errors.
Also, the path-perturbation only computes the `weak' linear response, which means that we need to first integrate the measure with an observable function and then take derivative; it does not directly express the derivative of the measure or the transfer operator.

The divergence method directly expresses the derivative of the measure or the transfer operator; this is the transfer operator method in deterministic settings.
It is good at unstable systems and derivatives of marginal densities.
Traditionally, for systems with contracting directions, the recursive divergence formula grows exponentially fast, so the cost of Monte-Carlo-type algorithm is high for long times.
The workaround is to use a finite-element-type algorithm, which has deterministic error rather than random sampling error, but is expensive in high dimensions \cite{Froyland2007,Galatolo2014,Wormell2019a,Zhang2020}.

The kernel-differentiation method also directly expresses the derivative of the measure or the transfer operator, but only for random systems, since the derivative hits the probability kernel.
In SDEs, this formula is a direct result of the Cameron-Martin-Girsanov theorem \cite{CM44,MalliavinBook,score14}; it is also called the likelihood ratio method or the Monte-Carlo gradient method \cite{Rubinstein1989,Reiman1989,Glynn1990}.
This formula also works for stationary measures, and proofs were given in \cite{HaMa10,GG19}.
It is good at taking derivative for random systems whose deterministic part has poor dynamical properties, such as non-hyperbolicity.
However, it cannot handle multiplicative noise or perturbation on the diffusion coefficients.
It is also expensive when the noise is small.

Mixing two basic methods can overcome some major shortcomings.
For hyperbolic systems, the fast response formula uses the path-perturbation method in the stable, and the divergence method in the unstable \cite{Ni_asl,TrsfOprt,fr,vdivF,GN25}. 
It is good at high dimensions and no-noise system \cite{Ni_nilsas,far}.
However, it does not work when the hyperbolicity is poor \cite{Baladi2007,wormell22}.

We can also mix the path-perturbation with the kernel methods.
The Bismut-Elworthy-Li formula \cite{Bismut84,EL94,HM06,PW19bismut} computes the derivative with respect to the initial conditions, but it does not handle $dB$-type perturbations caused by changing the diffusion.
The path-kernel method in \cite{dud} gives the linear response of the diffusion coefficients.
It is good at systems with not too small noise and not too much unstableness, it does not require hyperbolicity, and it can handle perturbation on initial conditions.
However, it can be expensive when the noise is small and unstableness is large.
Moreover, its cost increases linearly with the number of parameters; hence, it is not suitable for cases like the variational data assimilation problem in this paper, which has many parameters to be optimized.
The second issue can be solved by the adjoint formulation in this paper.

The paper \cite{divker} should be the first example mixing the divergence and kernel-differentiation methods.
Such a mixture is good at systems with not too much contraction and not too small noise; it allows multiplicative noise; it does not require hyperbolicity.
Moreover, it naturally handles the score, which is the derivative of marginal densities.
In \cite{DKlinR}, we further give a pathwise expression of the linear response, and use it to give a new framework for generative models.
The problem of generative model is related to but different from data assimilation: generative model infers the dynamics from many samples without an ordering in time, whereas data assimilation infers the dynamics from a sequence of data.
As to the divergence-kernel method, it does not handle perturbations on singular initial conditions well, so it does not work on the version of the data assimilation problem we consider.

There are other results that do not fall into our logic.
Some involve working in some abstract spaces beyond the basic path spaces, so they involve more complicated terms \cite{MG25score}.
Some have singularities in the dynamics, so they involve extra terms at the singularity \cite{wormell_pieceMap}.
Nevertheless, they have the same problem if they involve terms from the above methods.

We also proposed a triad program in \cite{Ni_kd}, which requires advancing and mixing all three methods.
That might be the best solution for computing derivatives of random systems or approximate derivatives of deterministic systems.

The first way to view the significance of the adjoint path-kernel formula in this paper is that we derive the adjoint version of the path-kernel method.
Practically speaking, here, adjoint means that the main term is shared for multiple $\gamma$.
So, the cost of computing the derivative with respect to many parameters is low.
The backpropagation method in machine learning is an adjoint method.
The second way to view this new formula is that we extend the backpropagation method to work in cases with gradient explosion.
The third way to view it is that we explicitly give the terms missing from clipping methods.
Fourth, the adjoint path-kernel formula enables a new 4D-Var framework for chaotic systems with partial observation and unknown parameters, which we demonstrate on Lorenz 63 and Lorenz 96.

\subsubsection{Data assimilation}

Data assimilation (DA) provides a mathematical and algorithmic framework for combining imperfect dynamical models with noisy, partial observations. 
In geoscience and related areas, it is used to reconstruct the evolving state of the atmosphere and ocean, estimate poorly known parameters, and build physically consistent reanalyses for climate studies and prediction \cite{DA_Kalnay,DA_AST}.

Within this general framework, one can distinguish sequential filtering methods from variational or 4D-Var methods. 
Filtering methods (Kalman-type filters, ensemble Kalman filters, particle filters, etc.) propagate an estimate of the state forward in time and then update it when new data arrive \cite{filter_ABN}. 
Conceptually, they solve many short-horizon problems, each associated with a single observation time, by combining the model forecast and the new observation. 
Hence, the effect of each observation is mostly local in time, and model error is often absorbed into process noise or inflation.
Moreover, a sequential filter does not typically provide an accurate initial condition at the start of the window. 
The early-time estimates are driven by a broad or misspecified prior and only become accurate after enough observations have been assimilated. 
On the other hand, this stepwise structure makes filtering applicable in cases with partial observation and unknown parameters in dynamics \cite{SG25_filter}.

By contrast, four-dimensional variational assimilation (4D-Var) explicitly optimizes over the entire trajectory (and possibly parameters), yielding a good initial condition and a dynamically consistent path that fits all observations simultaneously.
In this sense, the 4D-Var problem is `harder' and more ambitious than filtering: rather than solving many loosely coupled short-horizon problems, it demands one space-time path that simultaneously explains all observations and is dynamically consistent. 
At the same time, the long-window coupling makes the optimization problem more challenging, especially in chaotic, partially observed systems.

The main ingredient in the 4D-Var framework is to compute the gradient of the long-window cost function with respect to initial conditions by integrating adjoint equations \cite{4dvar_DT,4dvar_TC}. 
Despite its conceptual appeal, long-window 4D-Var is notoriously difficult to use in the regimes we care about. 
A first obstacle is partial observation and unknown parameters: when only a low-dimensional projection of the state is observed and key parameters in the dynamics are not known a priori, any natural initial guess for the optimization is typically very far from the truth or the basin of the global minimum.
Some recent 4D-Var results, in the current form of their analysis, avoid dealing with this obstacle by requiring observation of the full state  \cite{Maulik24,ShiLi25}.
This compounds with the second obstacle, long assimilation windows on a chaotic system, where the 4D-Var loss has many local minima \cite{Cox2015,Brocker2017}, and the backpropagation used to compute gradients suffers from exponential growth, leading to an exploding demand for samples.

First, we temporarily add noise to the dynamics to smooth out spurious local minima in the loss. 
Second, we use the adjoint path-kernel method, whose expression does not grow with time, to compute gradients of the long-time loss with respect to the initial condition, the parameter, and the correction term at each time step.
This allows us to run stochastic gradient descent starting from very rough initial guesses and still recover trajectories and parameters that both fit the observations and remain close to dynamically consistent paths, in settings where traditional 4D-Var formulations struggle.

Besides solving the 4D-Var problem, our method also addresses a trajectory reproducibility issue.
We optimize a collection of time-dependent correction terms, one for each time step; after optimization, the optimized correction terms temper chaos by attracting the trajectory toward the true orbit.
As a result, when we rerun the optimized dynamics, the simulated trajectory remains close to the true trajectory over the assimilation window.
In fact, in our numerical experiments, after removing the artificial noise and rerunning the corrected deterministic system, the optimized solution matches the observations even better and exhibits smaller dynamical discrepancy.

\subsection{Structure of the paper}

\Cref{s:notations} defines some basic notation and reviews the tangent version of the path-kernel method.
\Cref{s:derive} derives the adjoint results for discrete-time systems, then formally passes to the continuous- and infinite-time limit.
\Cref{s:numeric} considers numerical realizations, where we compute the linear response of the stationary measure of the 40-dimensional Lorenz 96 model with multiplicative noise.
This example cannot be solved by previous adjoint methods.

\Cref{s:assi} solves the 4D-Var data assimilation problem for long-time chaotic systems with unknown parameters and partial observation.
This problem uses gradient-based optimization, and the adjoint path-kernel method is the main tool; it cannot be solved by other gradient methods including the author's tangent path-kernel method and the divergence-kernel method.
\Cref{s:assi_desp} defines the dynamics, the loss function, and the step-wise correction terms.
\Cref{s:assi_adj} derives the adjoint path-kernel formula adapted to this specific problem.
\Cref{s:assi_opti} explains the optimization strategy and how to set learning rate.
\Cref{s:assi_3D} applies our solution on the 3D Lorenz 63 system with 2D observation and 1 unknown parameter.
\Cref{s:assi_10D} applies our solution on the 10D Lorenz 96 system with 8D observation and 1 unknown parameter.

\section{Notations and Preparations}
\label{s:notations}

We define some geometric notations.
Denote both vectors and covectors by column vectors in $\R^M$; the product between a covector $\nu$ and a vector $v$ is denoted by $\cdot$, 
that is,
\begin{equation*}\begin{split}
  \nu\cdot v 
  := v\cdot \nu
  :=\nu^T v
  :=v^T \nu.
\end{split}\end{equation*}
Here $v^T$ is the transpose of matrices or (co)vectors.
Note that the Brownian increment $\DB$ may be either a vector or a covector.
Denote
\begin{eqnarray*}
  \nabla (\cdot):=\pp{(\cdot)}x,\quad
  \nabla_v (\cdot):=
  \nabla (\cdot) v :=
  \pp{(\cdot)}x v,
\end{eqnarray*}
Here $\nabla_YX$ denotes the (Riemann) derivative of the tensor field $X$ along the direction of $Y$.
It is convenient to think that $\nabla$ always adds a covariant component to the tensor.
For a map $g$, let $\nabla g$ be the Jacobian matrix, or the pushforward operator on vectors.

In \cite{dud}, we rigorously derived the path-kernel formula for the linear response of discrete-time random dynamical systems.
Let $\gamma$ be the parameter that controls the dynamics, the initial condition, and hence the distribution of the process $\{x^\gamma_n\}_{n\ge0}$; by default $\gamma=0$, so $x:=x^{\gamma=0}$.
We denote the perturbation $\delta(\cdot):=\partial (\cdot)/ \partial\gamma|_{\gamma=0}$.
Let $\Phi$ be a fixed $C^2$ observable function.
Assume that the drift $f$ and diffusion $\sigma$ are $C^1$ functions and $C^1$-depend on $\gamma$.
Note that the tangent equation of $v$ depends on the path $x$ and the corresponding $\{ b_n \}_{ n\ge0 }$ that drives $x$.
In the following, let $\cF_n$ be the sigma-algebra generated by $x_0, b_0, \ldots, b_{n-1}$.

\begin{theorem}[tangent discrete-time path-kernel]
\label{t:tan_discrete}
Fix any $x_0$, $v_0$, and any $\alpha_n$ (called a `schedule') a scalar process adapted to $\cF_n$ and independent of $\gamma$.
Consider the random dynamical system,
\begin{equation*} 
  x^\gamma_{n+1}
  =f^\gamma(x^\gamma_n) + \sigma^\gamma(x^\gamma_n) b_n,
  \quad
  x_0^\gamma = x_0 + \gamma v_0,
  \quad
  b_n \overset{i.i.d.}{\sim} \cN(0,I). 
\end{equation*}
Note that $f^\gamma(\cdot)$ and $\sigma^\gamma(\cdot)$ depend on the parameter $\gamma$.
Let $v_n$ be the solution of the following tangent equation starting from $v_0$
\begin{equation*}
  v_{n+1} 
  = - \alpha_n v_n
  + \nabla_{v_n}f(x_n) + \delta f^\gamma(x_n) 
  + (\nabla_{v_n}\sigma(x_n) + \delta \sigma^\gamma (x_n)) b_n.
\end{equation*}
Denote $\Phi^{avg}_N:=\E{\Phi(x_N)}$, the linear response has the expression
\[
\delta \E{\Phi(x^\gamma_N)}
= 
\E{ \nabla\Phi(x_N) \cdot v_N
+(\Phi(x_N)-\Phi^{avg}_N)\sum_{n=0}^{N-1} \frac{b_n }{\sigma(x_n)}
\cdot \alpha_n v_n.
}
\]
\end{theorem}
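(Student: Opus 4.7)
The plan is to combine the classical pathwise derivative with a per-step score-function (kernel) substitution. First, I would introduce the \emph{undamped} tangent $\bar v_n$, defined by the same recursion as $v_n$ but with the $-\alpha_n v_n$ term removed, and with $\bar v_0 = v_0$. Because $f^\gamma$, $\sigma^\gamma$ are $C^1$ in $(x,\gamma)$ and the noise realization is $\gamma$-independent, standard pathwise differentiation (Leibniz under the expectation, justified by local Lipschitz bounds plus moment control on $x^\gamma_n$ and $\nabla\Phi$) yields the classical identity
\[
\delta \E{\Phi(x^\gamma_N)} \;=\; \E{\nabla\Phi(x_N)\cdot \bar v_N}.
\]

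Next, I would rewrite $\bar v_N$ in terms of the damped $v_N$. Setting $w_n := \bar v_n - v_n$, subtracting the two recursions and using linearity in the tangent variable gives $w_0 = 0$ and $w_{n+1} = L_n w_n + \alpha_n v_n$, where $L_n u := \nabla_u f(x_n) + \nabla_u\sigma(x_n)\, b_n$ is the homogeneous linearized step. Discrete Duhamel then produces
\[
\bar v_N \;=\; v_N + \sum_{n=0}^{N-1} \Psi_{n+1\to N}\bigl(\alpha_n v_n\bigr),
\qquad
\Psi_{n+1\to N} := L_{N-1}\circ\cdots\circ L_{n+1},
\]
so the response decomposes into the main term $\E{\nabla\Phi(x_N)\cdot v_N}$ plus one "injection" term per step, each of the form $\E{\nabla\Phi(x_N)\cdot \Psi_{n+1\to N}(\alpha_n v_n)}$.

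The heart of the argument is converting each injection into a kernel (score) weight. Conditioning on $\mathcal F_n$, the quantity $\alpha_n v_n$ is deterministic, and $\Psi_{n+1\to N}(u)$ is precisely the sensitivity of $x_N$ to an infinitesimal perturbation of $x_{n+1}$ in direction $u$. Since $x_{n+1}\mid\mathcal F_n\sim \mathcal N(f(x_n),\sigma(x_n)\sigma(x_n)^T)$, Gaussian integration by parts (Stein / the Cameron--Martin--Girsanov shift $b_n \mapsto b_n + \epsilon\sigma(x_n)^{-1}u$) gives, for the conditional expectation of $\Phi(x_N)$ written as $\varphi(x_{n+1})$,
\[
\E{\nabla\varphi(x_{n+1})\cdot u \,\big|\, \mathcal F_n}
\;=\; \E{\varphi(x_{n+1})\, \sigma(x_n)^{-T}b_n\cdot u \,\big|\, \mathcal F_n}.
\]
Tower property and the choice $u=\alpha_n v_n$ yield $\E{\nabla\Phi(x_N)\cdot \Psi_{n+1\to N}(\alpha_n v_n)} = \E{\Phi(x_N)\,\alpha_n v_n\cdot b_n/\sigma(x_n)}$, matching the notation of the theorem.

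Finally, summing over $n$ recovers the claimed identity, and the variance-reduction replacement $\Phi(x_N)\mapsto \Phi(x_N)-\Phi^{avg}_N$ is legitimate because $\E{b_n\mid\mathcal F_n}=0$ and $\alpha_n v_n$ is $\mathcal F_n$-measurable, so the added constant contributes zero to each expectation. The main obstacle I anticipate is the rigor of the per-step Girsanov/Stein step when $\sigma$ is matrix-valued and possibly $x$-dependent: one needs $\sigma(x_n)$ invertible (or at least a consistent interpretation of $b_n/\sigma(x_n)$) together with integrability bounds making the shifted Radon--Nikodym derivative differentiable at $\epsilon=0$ under $\Phi$. A secondary technical point is matching the two sides --- pathwise (giving $\nabla\Phi\cdot\Psi$) and measure-change (giving score times $\Phi$) --- simultaneously; the cleanest route is via the conditional identity above applied to $\varphi:=\E{\Phi(x_N)\mid\mathcal F_{n+1}}$, which sidesteps interchanging pathwise differentiation with the unbounded iterated product $\Psi_{n+1\to N}$ that would otherwise drive the argument.
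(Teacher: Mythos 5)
Your proposal is correct, but be aware that this paper never proves \Cref{t:tan_discrete} itself: the theorem is quoted from \cite{dud}, and the present paper only uses it as the input to \Cref{t:adj_discrete}, whose proof is purely algebraic (expand the \emph{damped} tangent $v_n$, transpose the propagators $M_n$, resum into the backward covector $\nu$). So your argument is a genuinely self-contained derivation of the tangent statement rather than a rederivation of anything printed here. Your three-step route --- the classical backpropagation identity $\delta\E{\Phi(x^\gamma_N)}=\E{\nabla\Phi(x_N)\cdot\bar v_N}$ for the undamped tangent, the discrete Duhamel splitting $\bar v_N=v_N+\sum_{n=0}^{N-1}\Psi_{n+1\to N}(\alpha_n v_n)$, and the per-step conditional Stein/Girsanov conversion of each injected damping into the score weight $\Phi(x_N)\,b_n/\sigma(x_n)\cdot\alpha_n v_n$ --- is exactly the ``shift part of the path-perturbation into the probability kernel'' mechanism that the paper only describes informally, and your expansion of $\bar v_N - v_N$ in products of the undamped $L_k$ is the mirror image of the paper's expansion of $v_n$ in products of the damped $M_k$ in the adjoint proof, so the two dovetail cleanly. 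All the checkable steps go through: the subtraction $w_{n+1}=L_nw_n+\alpha_nv_n$ is right, the injection timing ($\alpha_nv_n$ entering at step $n+1$, hence integration by parts on $b_n$) is consistent, and the centering by $\Phi^{avg}_N$ is correctly justified from $\E{b_n\mid\cF_n}=0$ with $\alpha_n v_n/\sigma(x_n)$ being $\cF_n$-measurable. Two minor caveats: (i) your device $\varphi:=\E{\Phi(x_N)\mid\cF_{n+1}}$ does not fully sidestep the interchange you worry about, since identifying $\nabla\varphi(x_{n+1})\cdot u$ with $\E{\nabla\Phi(x_N)\cdot\Psi_{n+1\to N}(u)\mid\cF_{n+1}}$ is itself a derivative--expectation interchange over the future noise (routine for finite $N$ with $C^1$ coefficients and moment bounds, but it should be stated as such); (ii) since $\Psi_{n+1\to N}$ is undamped it can be exponentially large in $N$, so the integrability hypotheses you flag are exactly the finite-$N$ moment conditions implicit in the theorem's exactness claim, and your requirement that $\sigma(x_n)$ be invertible matches the paper's implicit reading of the notation $b_n/\sigma(x_n)$.
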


The above result has \textit{no} approximation.
Then we formally passed to the continuous-time limit.
We assume that all integrations, averages, and change of limits are legitimate.
A fully rigorous proof would require additional regularity and mixing assumptions which we do not pursue here.
Here, $B$ denotes the Brownian motion.
In the formula below, typically $\alpha_t \ge 0$, so the term $\alpha_tv_t dt$ damps the unstable growth of the path-perturbation $v_t$; it is the portion of the path-perturbation shifted to the probability kernel.

\begin{formula}[tangent continuous-time path-kernel]
\label{t:tan_sde}
Fix any $x_0$, $v_0$, and adapted scalar process $\alpha_t$,
consider the Ito SDE,
\[
dx^\gamma_t
= F^\gamma(x^\gamma_t) dt 
+ \sigma^\gamma(x^\gamma_t)dB,
\quad
x_0^\gamma = x_0^\gamma :=  x_0 + \gamma v_0.
\]
Let $v_t$ be the solution of the damped tangent equation starting from $v_0$,
\[
dv 
= - \alpha_t v dt 
+ \left(\nabla_v F(x) + \delta F^\gamma (x) \right) dt
+ \left(d\sigma(x)v + \delta \sigma^\gamma (x)\right)dB.
\]
Then the linear response has the expression
\[
\delta \E{\Phi(x^\gamma_T)}
= 
\E{ d\Phi(x_T) v_T
+(\Phi(x_T)-\Phi^{avg}_T) \int_{t=0}^{T} \frac{\alpha_t v_t }{\sigma(x_t)}\cdot dB_t}.
\]
\end{formula}

Then we present the linear response formula, on a single orbit of infinite time, for the stationary measure.
When we run the SDE for an infinitely long time, if the probability does not leak to infinitely far away, then the distribution of $x_t$ typically converges weakly to the stationary measure $\mu$.
By the ergodic theorem, for any smooth observable function $\Phi$ and any initial condition $x_0$,
\[
\E[\mu^\gamma]{\Phi(x)}:= \int \Phi(x)d\mu^\gamma(x)
:=\lim_{T\rightarrow\infty} \E{\Phi(x^\gamma_T)}
\,\overset{\mathrm{a.s.}}{=}\,
\lim_{T\rightarrow\infty} 
\frac 1T \int_{t=0}^T \Phi(x^\gamma_t) dt.
\]

The following corollary formula was derived by letting $T\rightarrow\infty$, then applying the decay of correlations and the exponential decay of the propagation of the tempered tangent equation.
Let $\Phi^{avg}:=\E[\mu]{\Phi(x)}$.
Let $W$ indicate the decorrelation and $T$ the orbit length, typically $W\ll T$ in numerics,
\begin{formula}[tangent ergodic path-kernel]
\label{t:tan_ergodic}
\[
\delta \E[\mu^\gamma]{\Phi(x)}
\,\overset{\mathrm{a.s.}}{=}\,
\lim_{W\rightarrow\infty} \lim_{T\rightarrow\infty} 
\frac 1T \int_{t=0}^T \left[
d\Phi(x_t) v_t
+ \left( \Phi(x_{t+W}) - \Phi^{avg} \right)
\int_{\tau=0}^W  \frac{\alpha_{t+\tau} v_{t+\tau} }{\sigma(x_{t+\tau})} \cdot dB_{t+\tau} 
\right]dt.
\]
\end{formula}

\section{Deriving the adjoint} \label{s:derive}

\subsection{Discrete-time adjoint} \label{s:adj_discrete}

\begin{theorem}[adjoint discrete-time path-kernel]
\label{t:adj_discrete}
For any $x_0, v_0\in\R^M$, and any $\alpha_n$ (called a `schedule') a scalar process adapted to $\cF_n$ and independent of $\gamma$.
Consider the random dynamical system,
\begin{equation*}
  x^\gamma_{n+1}
  =f^\gamma(x^\gamma_n) + \sigma^\gamma(x^\gamma_n) b_n,
  \quad
  x_0^\gamma = x_0 + \gamma v_0,
  \quad
  b_n \overset{i.i.d.}{\sim} \cN(0,I). 
\end{equation*}
Define the backward covector process $\nu$ (it becomes deterministic once a path $\{x_n\}$ is fixed)
\begin{equation*}\begin{split}
  \nu_{N} = \nabla \Phi(x_N),
  \quad \textnormal{} \quad 
  \nu_{k} = 
  - \alpha_k \nu_{k+1} + 
  (\nabla f_k^T +  \nabla \sigma_k b_k^T) 
  \nu_{k+1} + (\Phi(x_N)-\Phi^{avg}_N) \alpha_k b_k / \sigma_k.
\end{split}\end{equation*}
The subscript $k$ means to evaluate at $x_k$ when needed.
Then, the linear response can be expressed by
\begin{equation*}\begin{split}
\delta \E{\Phi(x^\gamma_N)} 
= \E{\nu_0 \cdot v_0
+ 
\sum_{k=0}^{N-1} \nu_{k+1} \cdot \left(\delta f_k + \delta \sigma_k b_k\right)  }.
\end{split}\end{equation*}
\end{theorem}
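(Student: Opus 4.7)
The plan is to derive Theorem~\ref{t:adj_discrete} directly from the tangent version (Theorem~\ref{t:tan_discrete}) by a pathwise discrete summation-by-parts. The tangent recursion is affine in $v$, of the form $v_{n+1} = A_n v_n + s_n$, where
\[
A_n v := -\alpha_n v + \nabla f_n\, v + (\nabla \sigma_n \cdot v)\, b_n, \qquad s_n := \delta f_n + \delta \sigma_n\, b_n.
\]
The idea is to introduce a backward covector process $\nu_n$ and transfer the linear part $A_n$ onto $\nu$ via its pathwise transpose, while absorbing the kernel term $(\Phi(x_N)-\Phi^{avg}_N)\alpha_n b_n/\sigma_n$ from the tangent formula as an extra forcing in the backward recursion.

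Concretely, for any covector-valued sequence $\nu_n$ one has the algebraic identity
\[
\nu_{n+1} \cdot v_{n+1} - \nu_n \cdot v_n = (A_n^T \nu_{n+1} - \nu_n) \cdot v_n + \nu_{n+1} \cdot s_n.
\]
A pathwise computation of the adjoint gives $A_n^T \nu = -\alpha_n \nu + \nabla f_n^T \nu + (b_n \cdot \nu)\nabla \sigma_n$, which matches exactly the deterministic part of the backward equation in the statement (reading $\nabla \sigma_n b_n^T$ as the outer product whose action on $\nu_{n+1}$ produces $(b_n \cdot \nu_{n+1})\nabla\sigma_n$). Defining
\[
\nu_n := A_n^T \nu_{n+1} + c_n, \qquad c_n := (\Phi(x_N) - \Phi^{avg}_N)\,\alpha_n b_n/\sigma_n,
\]
and summing the telescoping identity from $n=0$ to $n=N-1$ yields the pathwise identity
\[
\nu_0 \cdot v_0 + \sum_{n=0}^{N-1} \nu_{n+1} \cdot s_n = \nu_N \cdot v_N + \sum_{n=0}^{N-1} c_n \cdot v_n.
\]

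To close the argument I would set $\nu_N := \nabla \Phi(x_N)$, which makes the right-hand side of the previous display coincide term-for-term with the integrand of the tangent formula in Theorem~\ref{t:tan_discrete}. Taking expectations on both sides and invoking that theorem then gives the adjoint formula. The one point I expect to warrant care — the main \emph{conceptual} obstacle, more than a technical one — is that $\nu_n$ is not $\cF_n$-adapted, since $c_n$ involves the future value $\Phi(x_N)$; this is precisely why the statement only asserts that $\nu$ becomes deterministic after the whole path $\{x_n\}_{n=0}^N$ is fixed. Because summation by parts is a purely algebraic identity along each realization, non-adaptedness causes no difficulty: probability enters only at the very last step, through the already-proved tangent result.
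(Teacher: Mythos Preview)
Your proposal is correct and follows essentially the same approach as the paper: both arguments exploit the pathwise affine structure $v_{n+1}=M_n v_n + p_{n+1}$ of the tangent recursion, identify the tangent integrand as $\sum_n \xi_n\cdot v_n$, and pass to the adjoint $\nu_k = M_k^T\nu_{k+1}+\xi_k$ with $\nu_N=\nabla\Phi(x_N)$. The only cosmetic difference is that the paper writes out the variation-of-constants expansions of $v_n$ and $\nu_k$ and then interchanges the order of summation, whereas your telescoping identity $\nu_{n+1}\cdot v_{n+1}-\nu_n\cdot v_n = -c_n\cdot v_n + \nu_{n+1}\cdot s_n$ accomplishes the same transfer more compactly; both are standard presentations of the same discrete duality, and your remark on non-adaptedness of $\nu$ correctly anticipates why the argument must be pathwise.
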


\begin{proof}
We can obtain a pathwise tangent-adjoint equivalence.
On each path, $\{ x_n \}_{ n=0 }^N$ and $\{ b_n \}_{ n=0 }^N$ are known, so the tangent equation of $v_n$ in \Cref{t:tan_discrete} becomes deterministic, which we shorten as
\begin{equation*}\begin{split}
  v_{n+1} = M_n v_n + p_{n+1},
  \quad \textnormal{where} \quad 
  M_n := - \alpha_n I  + \nabla f_n + b_n \nabla \sigma_n^T,
  \quad
  p_{n+1} := \delta f_n + \delta \sigma_n b_n,
\end{split}\end{equation*}
Here we used $
\nabla_{v} \sigma b 
= b (\nabla \sigma^T v) 
= (b \nabla \sigma^T ) v.
$
Here $p_n$ is a vector at $x_n$.
Note that $\delta f_n$ is a vector at $x_{n+1}$.
This equation is affine in $v$, so we can write out the expansion of $v_n$ for $n\ge 1$,
\begin{equation*} \begin{split}
  v_n = M_{n-1} \cdots M_{0} v_0
  + \sum_{k=1}^{n} M_{n-1} \cdots M_{k} p_k,
\end{split} \end{equation*}
where the sum is zero for $n=0$, so $v_0=v_0$.

By \Cref{t:tan_discrete}, the linear response has the following expression
\begin{equation*}\begin{split}
\delta \E{\Phi(x^\gamma_N)} = \E{ \sum_{n=0}^{N} \omega_n \cdot v_n },
\quad \textnormal{where} \quad 
\\
\omega_N := \nabla \Phi(x_N),
\quad \textnormal{} \quad 
\omega_n := (\Phi(x_N)-\Phi^{avg}_N) \alpha_n b_n / \sigma_n.
\end{split}\end{equation*}
Substituting the expansion of $v_n$ and transposing matrices, we have
\begin{equation*}\begin{split}
\delta \E{\Phi(x^\gamma_N)} 
= \E{ \sum_{n=0}^{N} \omega_n \cdot \left(
M_{n-1} \cdots M_{0} v_0
+ \sum_{k=1}^{n} M_{n-1} \cdots M_{k} p_k \right) }
\\
= \E{ \sum_{n=0}^{N} M_{0}^T \cdots M_{n-1}^T \omega_n \cdot v_0
+ 
\sum_{n=1}^{N} \sum_{k=1}^{n} M^T_{k} \cdots M^T_{n-1} \omega_n \cdot p_k },
\end{split}\end{equation*}
Interchange the order of summation, we get
\begin{equation*}\begin{split}
\delta \E{\Phi(x^\gamma_N)}
= \E{ \sum_{n=0}^{N} M_{0}^T \cdots M_{n-1}^T \omega_n \cdot v_0
+ \sum_{k=1}^{N} \sum_{n=k}^{N} M^T_{k} \cdots M^T_{n-1} \omega_n \cdot p_k },
\end{split}\end{equation*}

Define the backward covector process $\nu$ (on this path it is also deterministic) by
\begin{equation*}\begin{split}
  \nu_{N} = \omega_N,
  \quad \textnormal{} \quad 
  \nu_{k} = M_k^T \nu_{k+1} + \omega_k.
\end{split}\end{equation*}
So the $\nu_n$ has the expansion
\begin{equation*}\begin{split}
  \nu_k = \sum_{n=k}^{N}  M^T_k \cdots M^T_{n-1} \omega_n.
\end{split}\end{equation*}
Hence, the linear response can be expressed by
\begin{equation*}\begin{split}
\delta \E{\Phi(x^\gamma_N)} 
= \E{\nu_0 \cdot v_0
+ 
\sum_{k=1}^{N} \nu_k \cdot p_k },
\end{split}\end{equation*}
The theorem is proved by substituting the definitions of $p, M$, and $\omega$.
\end{proof}

\subsection{Continuous-time adjoint} \label{s:cts}

We \textit{formally} pass the discrete-time results to the continuous-time limit SDE to derive \Cref{t:adjSDE}.
Let $\cF_t$ be the $\sigma$-algebra generated by $\{B\tau\}_{\tau\le t}$ and $x_0$.
We take $\alpha_t$ to be a scalar process adapted to $\cF_t$.
We also assume that $\alpha_t$ is integrable with respect to $dB_t$.


\begin{derivation}[Derivation of \Cref{t:adjSDE}]
Our derivation is performed on the time span divided into small segments of length $\Dt$.
Let $N$ be the total number of segments, so $N\Dt = T$.
Denote
\[ 
  \DB_n:= B_{n+1} - B_n.
\]
Denote $\alpha_n = \alpha_{n\Dt}$.
The discretized SDE is 
\begin{equation*} 
  x_{n+1} - x_{n} = F(x_n) \Dt + \sigma(x_n)\DB_n.
\end{equation*}
Comparing with \Cref{t:adj_discrete} (whose  $\alpha$ and $\sigma$ are denoted by $\alpha'$ and $\sigma'$ here), we have
\begin{equation*}\begin{split}
  f(x) := x+ F(x)\Dt, \quad
  \sigma'(x) := \sigma(x) \sqrt{\Dt}, \quad 
  b_n := \DB_n /\sqrt{\Dt}, \quad
  \alpha'_n := \alpha_n \Dt.
\end{split}\end{equation*}

So, the terminal condition of $\nu$ becomes $ \nu_{T} = \nabla \Phi(x_T)$, and its backward equation becomes
\begin{equation}\begin{split} \label{e:liu}
  \nu_{k} = 
  - \alpha'_k \nu_{k+1} + 
  (\nabla f_k^T +  \nabla \sigma'_k b_k^T) 
  \nu_{k+1} + (\Phi(x_N)-\Phi^{avg}_N) \alpha'_k b_k / \sigma'_k
  \\
  = \nu_{k+1}
  - \alpha_k \nu_{k+1} \Dt + 
  (\nabla F_k^T \Dt +  \nabla \sigma_k \DB_k^T) 
  \nu_{k+1} + (\Phi(x_N)-\Phi^{avg}_N) \alpha_k \DB_k / \sigma_k.
\end{split}\end{equation}
Then, the expression of the linear response becomes
\begin{equation}\begin{split} \label{e:zhao}
\delta \E{\Phi(x^\gamma_N)} 
= \E{\nu_0 \cdot v_0
+ 
\sum_{k=0}^{N-1} \nu_{k+1} \cdot \left(\delta F_k \Dt + \delta \sigma_k \DB_k\right)  }.
\end{split}\end{equation}
Then we formally pass to the limit $\Dt\rightarrow0$.
\end{derivation}

\subsection{Infinite-time adjoint} \label{s:infinite}

Then we formally derive the adjoint linear response formula of stationary measures.

\begin{formula} [formal adjoint infinite-time path-kernel] \label{t:adjInf}
Assume there is only one stationary measure for the SDE
\[
dx^\gamma_t
= F^\gamma(x^\gamma_t) dt 
+ \sigma^\gamma(x^\gamma_t)dB.
\]
If we solve the backward adjoint equation with zero terminal condition $\nu_{T}=0$,
\begin{equation*}\begin{split} 
  - d \nu_t
  = 
  - \alpha_t \nu_t dt + \nabla F^T_t \nu_t dt +  \nabla \sigma_t \nu^T_t dB_t
  + \nabla \Phi_t dt + 
  \frac{\alpha_t} { \sigma_t } 
  \left(\int_{\tau=0}^W \left( \Phi_{t+\tau} - \Phi^{avg} \right) dt \right)
  dB_t.
\end{split}\end{equation*}
Then the linear response has the expression
\[
\delta \E[\mu^\gamma]{\Phi(x)}
\,\overset{\mathrm{a.s.}}{=}\,
\lim_{W\rightarrow\infty} 
\lim_{T\rightarrow\infty} 
\frac 1{T} \int_{t=0}^{T} 
\nu_t \cdot
\left[ \delta F^\gamma_t dt + \delta \sigma^\gamma_t dB_t \right].
\]
Here the integrations of the backward processes are the limits of \Cref{e:nu,e:infi_response}.
\end{formula}

\begin{derivation}
The time-discretized version of \Cref{t:tan_ergodic} is, for the SDE
\[
x^\gamma_{n+1}
= x^\gamma_n + F^\gamma(x^\gamma_{n}) \Dt 
+ \sigma^\gamma(x^\gamma_{n}) \DB_n,
\]
let $v_n$ be the solution of tangent equation
\begin{equation*}\begin{split}
v _{n+1}
= v_n - \alpha_n v_n \Dt 
+ \nabla F_n v_n \Dt + \nabla\sigma_n^T v_n \DB_n
+ p_{n+1}, \\
\quad \textnormal{where} \quad 
p_{n+1}:= \delta F^\gamma_n \Dt + \delta \sigma^\gamma_n \DB_n.
\end{split}\end{equation*}
The initial condition does not matter since $x, v$ converges to stationary measure, so we set $v_0=0$.
Then the linear response has the expression
\[
\delta \E[\mu^\gamma]{\Phi(x)}
\,\overset{\mathrm{a.s.}}{=}\,
\lim_{N_W\rightarrow\infty} \lim_{N\rightarrow\infty} 
\frac 1{N} \sum_{n=0}^{N-1} \left[
\nabla \Phi_n v_n
+ \left( \Phi_{n+N_W} - \Phi^{avg} \right)
\sum_{m=0}^{N_W-1}
\frac{\alpha_{n+m} v_{n+m} }{\sigma_{n+m}} \cdot \DB_{n+m} 
\right].
\]
Here $N=T/\Dt$, $N_W = W/\Dt$, where $W$ is the decorrelation length.
Collecting $v_n$ at the same time step, then divide and multiply by $\Dt$, we get
\begin{equation*}\begin{split}
\delta \E{\Phi(x^\gamma)}
\,\overset{\mathrm{a.s.}}{=}\,
\lim_{N_W\rightarrow\infty} \lim_{N\rightarrow\infty} 
\frac 1{N\Dt} \sum_{n=0}^{N-1} 
v_n \cdot\omega_n \Dt
\\
\quad \textnormal{where} \quad 
\omega_n := 
\nabla \Phi_n
+ 
\frac{\alpha_n}{\sigma_n} \DB_n 
\sum_{m=1}^{N_W}
\left( \Phi_{n+m} - \Phi^{avg} \right)
\end{split}\end{equation*}

By the same argument as in the proof of \Cref{t:adj_discrete},
if we solve the backward adjoint equation with zero terminal condition $\nu_{N}=0$,
\begin{equation}\begin{split} \label{e:nu}
  \nu_{k} 
  = \nu_{k+1}
  - \alpha_k \nu_{k+1} \Dt 
  + (\nabla F_k^T \Dt +  \nabla \sigma_k \DB_k^T) \nu_{k+1} 
  + \omega_k \Dt.
\end{split}\end{equation}
Then, on this path, we have the exact equivalence
\begin{equation*}\begin{split}
  \sum_{n=0}^{N-1} 
  v_n \cdot \omega_n \Dt
  =   
  \sum_{k=0}^{N-1} 
  p_{k+1} \cdot \nu_{k+1}.
\end{split}\end{equation*}
Hence, the linear response has the expression
\begin{equation}\label{e:infi_response}
  \delta \E[\mu^\gamma]{\Phi(x)}
  \,\overset{\mathrm{a.s.}}{=}\,
  \lim_{N\rightarrow\infty} 
  \frac 1{N\Dt} \sum_{k=0}^{N-1} \left[
  \left(\delta F^\gamma_k\Dt + \delta \sigma^\gamma_k \DB_k \right)  
  \cdot \nu_{k+1} 
  \right].
\end{equation}
Then we formally pass to $\Dt\rightarrow 0$.
\end{derivation}

\subsection{How to use}

We discuss how to use the adjoint path-kernel formulas.
The discussion of the tangent version in \cite{dud} also applies to the adjoint version in this paper.
Roughly speaking, we set $\alpha$ to be larger than the largest Lyapunov exponent.
If we care much about cost, we can further let $\alpha$ take different values based on $x$.
To ultimately reduce the cost, we should involve the divergence method, and a preliminary result is given in \cite{divker}.

For the linear response of stationary measures, when using \Cref{t:adjInf} in practice, to accelerate convergence, we should throw away some steps at the start and end of the path in $[0,T]$.
Because for $t\in[0, W]$, each $\omega_n$ multiplies with less than $N_W$ many $p_n$'s.
For $t\in[T-W, T]$, each $p_n$ multiplies with less than $N_W$ many $\omega_n$'s.
Our assumption of decorrelation basically requires that each $p_n$ multiplies with the next $N_W$ many $\omega_n$'s, and we can ignore the rest.
Hence, the contributions from these two time spans tend to have smaller absolute values than average.
We should first compute $v$ or $\nu$ on $[0,T]$, throw away the part in the time span $[0,W]$ and $[T-W,T]$, then compute the product and take the average.

Our formula involves a forward process of $x_n$'s and then a backpropagation process of $\nu_n$'s.
It seems that backpropagation requires us to record all $\DB_n$'s generated during the forward process.
We cannot use the conventional checkpoint trick for conventional adjoint methods in deterministic systems, which stores $x_n$ occasionally, then recover a small segment of the path when the backpropagation reaches this segment.
In random systems, we cannot calculate $x_{n+1}$ from only knowledge of $x_n$; we must also know $\DB_n$, which cannot be obtained unless we remember it during the forward run.
This extra memory cost might be regarded as unacceptable in some applications, such as fluid optimization; for these cases, we might need to compute parameter-gradient on low-fidelity simulations, and the result should still be helpful for high-fidelity simulations.
However, this extra cost is negligible for important applications such as neural networks.

\section{Linear response numerical examples: 40-dimensional Lorenz 96 system }
\label{s:numeric}

We use \Cref{t:adjInf} to compute the linear response of the stationary measure of the Lorenz 96 model \cite{Lorenz96} with multiplicative noise.
The dimension of the system is $M=40$.
The SDE is
\begin{eqnarray*}
  d x^i
  = \left( \left(x^{i+1}-x^{i-2}\right) x^{i-1} - x^i + \gamma^0 - 0.01 (x^i)^2 \right) dt + (\gamma^1 + \sigma(x)) dB^i
  \quad \textnormal{where}
  \\
  \sigma(x) = \exp{- |x|^2/2};
  \quad
  i=1, \ldots, M;
  \quad
  x_0 = [1,\ldots,1].
\end{eqnarray*}
Here $i$ labels different directions in $\R^M $, and it is assumed that $x^{-1}=x^{M-1}, x^0=x^M$ and $x^{M+1}=x^1$. 
We added noise and the $ - 0.01 (x^i)^2$ term, which prevents the noise from carrying us to infinitely far away.
Here, the parameter $\gamma^0$ controls the drift term and $\gamma^1$ controls the diffusion.
We consider the parameter region
\begin{equation*}\begin{split}
  \gamma^0\in [6,10],
  \quad \textnormal{} \quad 
  \gamma^1\in [2,6].
\end{split}\end{equation*}
The observable is 
\begin{equation*}\begin{split}
  \Phi(x) = |x|^2/M.
\end{split}\end{equation*}
The terms in \Cref{t:adjInf} become
\begin{equation*} \begin{split}
  \nabla \sigma(x) = - \sigma x, 
  \quad \quad
  \delta^0 F = [1,\ldots,1],
  \quad \quad
  \delta^1 \sigma = 1,
\end{split} \end{equation*}
where $\delta^i$ means taking derivative with respect to $\gamma^i$.
A typical orbit is in \Cref{f:orbit}.

\begin{figure}[ht] \centering
  \includegraphics[width=0.5\textwidth]{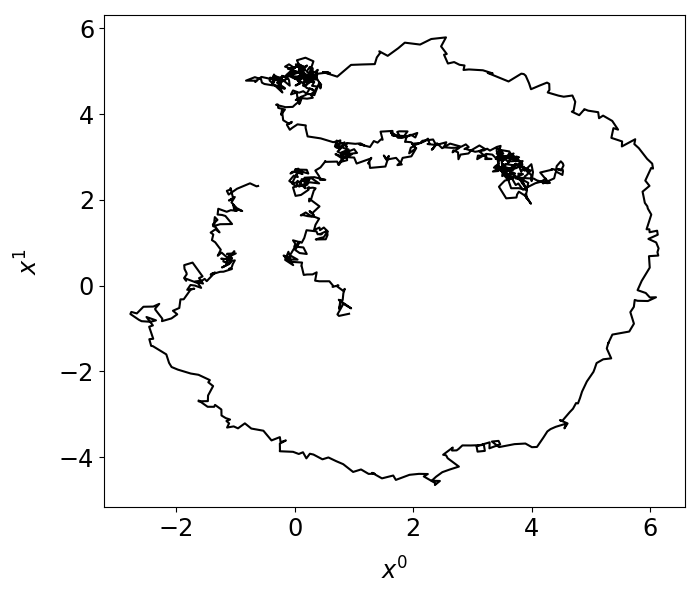}
    \caption{Plot of $x^0_t, x^1_t$ from a typical orbit of length $T=2$ at $\gamma^0=8, \gamma^1=2$. 
    }
  \label{f:orbit}
\end{figure}

Our goal is to compute the linear responses of the stationary measure with respect to the two parameters, and to see if it can be helpful for gradient-based optimization.
In our algorithm, we use the Euler integration scheme with $\Dt=0.002$, and set
\[
  \alpha_t \equiv 5
\]
to temper the unstableness.
In \Cref{t:adjInf}, we set $T=2000$ and $W=2$.

The derivatives with respect to each parameter are shown in \Cref{f:lorenz_line}.
As we can see, the algorithm gives accurate linear responses.
In particular, we plot $\Phi^{avg}$ computed on the original Lorenz system without noise.
The deterministic system seems to have no linear response: No one could prove it or compute it accurately.
However, if we add noise and compute the linear response of the noised system, the gradient is still very useful for the optimization of the original system.

\begin{figure}[ht] \centering
  \includegraphics[width=0.49\textwidth]{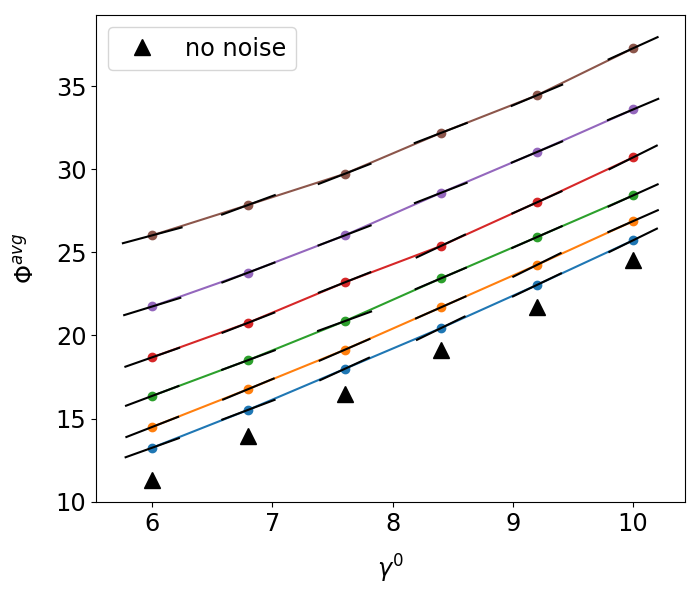}
  \includegraphics[width=0.49\textwidth]{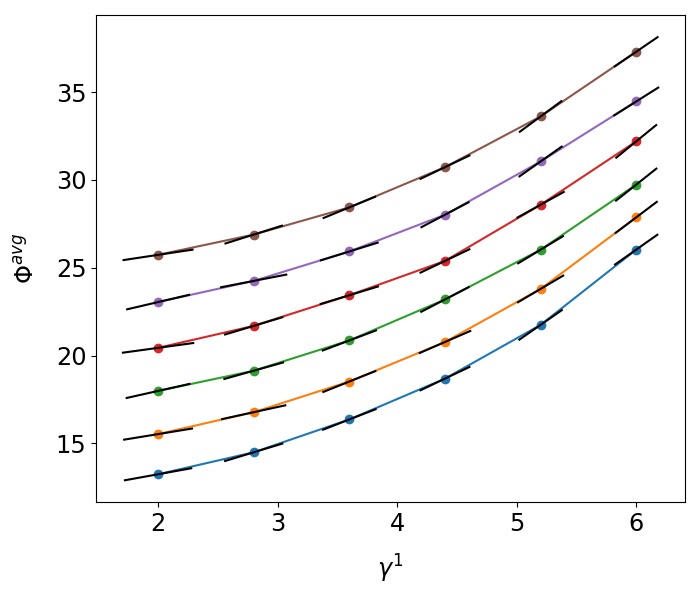}
  \caption{$\Phi^{avg}$ and $\delta \Phi^{avg}$ of the stationary measure.
  The dots are $\Phi^{avg}$, and the short lines are $\delta \Phi^{avg}$ computed by the adjoint path-kernel algorithm; they are computed from the same orbit of $T=1000$, $W=2$.
  Left: $\Phi^{avg}$ vs. $\gamma^0$, where each line is computed with a different $\gamma^1$.
  The black triangles are computed on the original Lorenz system without noise.
  Right: $\Phi^{avg}$ vs. $\gamma^1$, each line has a different $\gamma^0$.
  }
  \label{f:lorenz_line}
\end{figure}

Gradient vectors with respect to both parameters are shown in \Cref{f:lorenz_contour}.
As we can see, the gradient computed points to the ascent direction.
This enables gradient-based optimization.
Note that here each gradient consists of two derivatives, but we only need to run the adjoint algorithm only once to get the main term $\nu$, which is shared by the two parameters.
Hence, our adjoint path-kernel is suitable for cases with many parameters.

\begin{figure}[ht] \centering
  \includegraphics[width=0.5\textwidth]{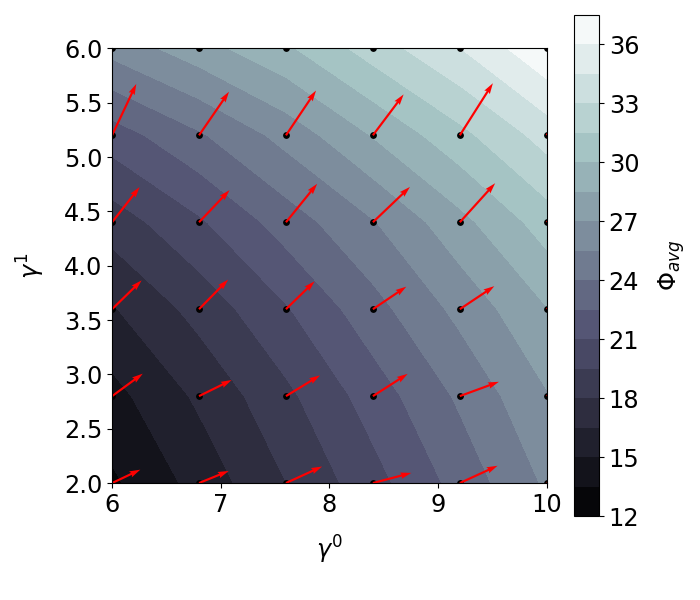}
  \caption{Gradients and the contour of $\rho(\Phi)$. The arrow is $1/10$ of the gradient.}
  \label{f:lorenz_contour}
\end{figure}

\section{Application in 4D-Var data assimilation}
\label{s:assi}

The adjoint path-kernel method enables a solution to the difficult version of the 4D variational data assimilation, where a sample \textit{low-dimensional} observation path of an SDE is given, and we try to infer the unknown parameters and the original path.
This problem is difficult for all the other known methods due to the lack of gradient tools. 
It is also difficult for the authors' other methods such as the tangent path-kernel method in \cite{dud} and the divergence-kernel method in \cite{DKlinR}.

\subsection{Problem description}
\label{s:assi_desp}

Given a path of observations $\{y_t\}_{t\in[0,T]}$, we assume that it comes from the system
\[
dx^\gamma_t
= F^\gamma(x^\gamma_t) dt 
+ \sigma^\gamma(x^\gamma_t) dB,
\quad \textnormal{} \quad 
y_t = \phi(x_t),
\]
where the parameter $\gamma$ and the initial condition $x_0$ are to be inferred.
Here $x\in \R^M, y\in \R^{N}$, where typically $N < M$.
This problem is called the data assimilation problem.
Previously, solving this problem typically required decomposing the time span into small segments and performing inference/update/optimization on each segment.
Another workaround is to use a very informative observation $\phi$, which gives sufficient data within a short time to make a good guess on the state $x_t$, thus reducing the difficulty.

With our new tool, we can solve the 4D-Var optimization problem on a long-time window for chaotic systems from low-dimensional observations. 
More specifically, we solve
\begin{equation}\begin{split}
\label{e:4dvar}
  \min_{\gamma, x_0, \xi_t} \E{ \Phi(x_{[0,T]}) }
  \quad \textnormal{where} \quad 
  \Phi(x_{[0,T]})
  :=\frac 1{2T} \int_0^T |\phi(x_t)-y_t|^2 + C |\xi|^2 dt
  \\
  \textnormal{such that} \quad 
  dx_t
  = F^\gamma (x_t) dt 
  + \sigma^\gamma dB 
  + \xi_t dt.
\end{split}\end{equation}
Here $x_0$ is deterministic and needs to be optimized.
$\xi_t$ can be any adapted process describing the error in the SDE; it can be a function of $x_{0\sim t}$.
For simplicity, here we assume the form 
\begin{equation*}\begin{split}
  \xi_t := \xi^\gamma(x_t, x_t^*) = \gamma' |x_t^*-x_t|^2 (x_t^*-x_t),
\end{split}\end{equation*}
where $x_t^*\in\R^M$ is deterministic and needs to be optimized with the parameter $\gamma'$.
We also assume for simplicity that $\sigma^\gamma$ does not depend on $x$ but its value needs to be optimized, so we denote it just by $\sigma$.
All these simplified model can be replaced by more complicated models such as neural networks; we focus on a conceptual demonstration and leave those more complicated models to future papers.

We explain how to get the constant coefficient $C$ in the loss function $\E{\Phi}$.
This coefficient is meant to balance the two parts of the loss function: the discrepancy in the observation $\phi-y$ and the outside force $\xi$.
But the two parts have different units.
In particular, the size of $\phi-y$ should be compared with the observation $\phi$, and $\xi$ should be compared with $F$.
Hence, in practice, we can set
\begin{equation*}\begin{split}
  C \approx  \int_0^T |\phi|^2 dt \left(\int_0^T |F|^2 dt \right)^{-1},
\end{split}\end{equation*}
so that the two parts in the loss function are properly scaled and are comparable.

\subsection{Adapting adjoint path-kernel method to this case}
\label{s:assi_adj}

First, we need to modify our formulas a bit so that the loss is the form of a time integration.
For convenience we directly work with time-discretized SDEs.
We shall also overload our notation and use $\delta$ to denote a first-order small perturbation: this is perhaps clearer, since we now want to directly optimize several quantities such as $x_0$ and $x_n^*$, which are parameters themselves (in particular, they are not parameterized by $\gamma$'s).

The tangent path-kernel formula in \Cref{t:tan_discrete} becomes: for 
\begin{equation} \label{e:wan}
  x_{n+1}
  =x_n + F^\gamma (x_n) \Dt + \sigma \DB_n + \xi^\gamma(x_n, x_n^*) \Dt,
  \quad
  \DB_n \overset{i.i.d.}{\sim} \cN(0,\Dt). 
\end{equation}
We use $\nabla_\gamma$ to denote the gradient in $\gamma$, $\nabla_*$ to denote the gradient in $x^*$. 
In our simplified model, $\delta \xi = \nabla \xi \delta x + \nabla_\gamma\xi \delta \gamma + \nabla_*\xi \delta x^*$.
Let $\delta x_n$ be the solution of the following tangent equation starting from $\delta x_0$;
\begin{equation*}\begin{split}
  \delta x_{n+1} 
  = (1 - \alpha_n \Dt) \delta x_n
  + \nabla F_n  \delta x_n \Dt
  + \nabla_\gamma F_n \delta \gamma \Dt
  + \delta \sigma \DB_n
  + \delta \xi_n \Dt
  \\
  =M_n \delta x_n + p_{n+1},
  \quad \textnormal{where} \quad 
  M_n:= (1 - \alpha_n \Dt) I + \nabla F_n\Dt + \nabla \xi_n \Dt,
  \\
  p_{n+1} := \nabla_\gamma F_n \delta \gamma \Dt
  + \delta \sigma \DB_n
  + (\nabla_\gamma\xi_n \delta \gamma' + \nabla_*\xi_n \delta x_n^*) \Dt.
\end{split}\end{equation*}
Denote $\tilde \Phi := \Phi(x_0,\ldots, x_N) - \E{\Phi}$, the linear response has the expression
\begin{equation*}\begin{split}
\delta \E{\Phi}
= 
\frac 1T \E{  
\sum_{n=0}^{N-1}
\left(
(\phi_n - y_n)^T \nabla\phi_n \Dt 
+ \frac{1}{\sigma} \tilde \Phi  \alpha_n \DB_n^T 
\right) \delta x_n
+ C \xi_n \cdot \delta \xi_n \Dt }
\\
= \frac 1T \E{  
\sum_{n=0}^{N-1}
\omega_n \cdot \delta x_n
+ C \xi_n \cdot (\nabla_\gamma\xi_n \delta \gamma' + \nabla_*\xi_n \delta x_n^* ) \Dt
}
, \quad \textnormal{where} \\ 
\omega_n := \nabla\phi_n^T (\phi_n - y_n) \Dt 
+ \frac{1}{\sigma} \tilde \Phi  \alpha_n \DB_n 
+ C \nabla \xi_n^T \,\xi_n \Dt
.
\end{split}\end{equation*}
Then we can obtain the adjoint formulation by the same procedure as in \Cref{s:derive}.
The linear response is expressed by
\begin{equation*}\begin{split}
\delta \E{\Phi }
= \frac 1T \E{\nu_0 \cdot \delta x_0
+ \sum_{k=0}^{N-1} \nu_{k+1} \cdot p_{k+1}
+ C \xi_k \cdot (\nabla_\gamma\xi_k \delta \gamma' + \nabla_*\xi_k \delta x_k^*) \Dt }
\end{split}\end{equation*}
where the adjoint process is defined below with the expanded linear response formula.

\begin{formula} [gradients for 4D-Var] \label{t:assimi}
The linear response has the expanded formula
\begin{equation*}\begin{split}
  \delta \E{\Phi }
  = \frac 1T \E{\nu_0 \cdot \delta x_0
  + \delta \gamma \cdot \sum_{k=0}^{N-1} \nabla_\gamma F_k^T \nu_{k+1} \Dt
  + \delta \sigma \sum_{k=0}^{N-1} \DB_k ^T \nu_{k+1} \right.
  \\ \left.
  + \delta \gamma' \sum_{k=0}^{N-1} \nabla_\gamma \xi_k ^T (\nu_{k+1} + C \xi_k) \Dt
  + \sum_{k=0}^{N-1} \delta x_k^* \cdot \nabla_*\xi_k^T \, (\nu_{k+1} + C \xi_k) \Dt}
\end{split}\end{equation*}
where the backward process is 
\begin{equation*}\begin{split}
  \quad \textnormal{} \quad 
  \nu_{k}
  = M_k^T\nu_{k+1} + \omega_k
  = (1 - \alpha_k \Dt)\nu_{k+1} 
  + \nabla F_k^T \nu_{k+1} \Dt 
  + \nabla \xi_k^T \nu_{k+1} \Dt
  \\
  + \nabla\phi_k ^T (\phi_k - y_k) \Dt
  + \frac{ 1}{\sigma} \tilde \Phi \alpha_k \DB_k
  + C \nabla \xi_k^T \,\xi_k \Dt
\end{split}\end{equation*}
with terminal condition $ \nu_{N} = 0 $.
\end{formula}

\subsection{Optimization via stochastic gradient descent}
\label{s:assi_opti}

With the new gradient tool, we can use a very simple strategy to solve the 4D-Var problem in \Cref{e:4dvar}: stochastic gradient descent.
We explain this and how to select the step size, or learning rate, $\eta$ for each parameter.

In each iteration, we update all parameters according to
\begin{equation*}\begin{split}
\delta x_0 = -\frac {\eta_1} T \E{\nu_0} ,\quad
\delta \gamma = -\frac {\eta_2} T \E{ \sum_{k=0}^{N-1} \nabla_\gamma F_k^T \nu_{k+1} \Dt}, \quad
\delta \sigma = -\frac {\eta_3} T \E{\sum_{k=0}^{N-1} \DB_k ^T \nu_{k+1}},
\\
\delta \gamma'= -\frac {\eta_4} T \E{\sum_{k=0}^{N-1} \nabla_\gamma \xi_k^T (\nu_{k+1} + C \xi_k) \Dt}, \quad
\delta x_k^* = -\frac {\eta_5} T \E{ \nabla_*\xi_k^T (\nu_{k+1} + C \xi_k)} 
\end{split}\end{equation*}
where $\eta >0$ is the stepsize, or learning rate.
Up to the order of $O(\eta)$, this guarantees a reduction in the overall loss functions for small $\eta$.
To compute the expectation in the above formulas, we first compute $L$ many sample paths, then compute the backpropagation in \Cref{t:assimi} on each path, then average over all sample paths.
For the later numerical example, we can pick a small $L$ such as 10.

We can set $\eta_i=1$ in general and then modify it strategically.
First, we do not want $\eta_i$'s to be too large so that we are out of the linear region where the parameter-gradient is meaningful -- this is an idea explored in \cite{linearRange_GD}.
The criterion is to limit, say $\eta_0$ for example, so that 
\begin{equation*}\begin{split}
  \pp{L}{x_0}  \delta x_0 = \eta_0 \left(\pp{L}{x_0}\right)^2 \le 0.1 \E{\Phi}
\end{split}\end{equation*}
This means that the projected reduction in the loss due to updating $x_0$ is at most 10\%.

Another upper bound for $\eta_3$ and $\eta_4$ comes from the fact that $\sigma$ and $\gamma'$ must be positive by definitions.
Also, $\sigma$ should not be too small; otherwise our path-kernel estimator becomes large and thus requiring more samples to take the average.
Hence, we require
\begin{equation*}\begin{split}
  \sigma + \delta \sigma \ge \sigma_{min}, \qquad
  \gamma' + \delta \gamma' \ge \gamma'_{min}.
\end{split}\end{equation*}
In the later example we set $\sigma_{min}=0.5, \gamma'_{min}=0.1$.

The cubic model for $\xi$ can cause numerical instabilities when integrating the primal ODE in \Cref{e:wan} by Euler forward scheme.
This is especially problematic at the beginning phase of the optimization where $x_t$ and $x^*_t$ are far away from each other, and the cubic model in $\xi$ can get very large.
There are two solutions to avoid this issue.
First, we can impose a separate restriction on $\gamma'$ or $\eta_4$ so that $\xi$ does not get too large.
Second, we can set the initial guess of $x_0$ and $x_n^*$'s close to each other.
In the later example, we use the second solution, and $x_0$ and all $x_n^*$'s are initialized at the origin.

\subsection{Numerical example: 3D Lorenz with 2D observations}
\label{s:assi_3D}

We demonstrate our method on a 3D Lorenz 63 system whose equation is
\begin{equation*}\begin{split}
  d \begin{bmatrix} x^0 \\ x^1 \\ x^2 \end{bmatrix}
  = \begin{bmatrix} 10(x^1-x^0) \\ 
  x^0(\gamma-x^2)-x^1 \\ 
  x^0 x^1 - \frac 83 x^2 \end{bmatrix} dt
  + \sigma dB + \gamma'|x_t^*-x_t|^2 (x_t^*-x_t) dt, \qquad
  \phi (x) = \begin{bmatrix} x^1 \\ x^2 \end{bmatrix}.
\end{split}\end{equation*}
Note that here the superscript for directions starts from $0$.
We use the forward-Euler scheme to integrate this SDE, with $\Dt=0.002$.

The path of observation, or data, is obtained by solving the original Lorenz system without noise and outside force $\xi$, which is obtained by setting $\sigma=0$ and $\gamma'=0$.
Then we set 
\begin{equation*}\begin{split}
\gamma_{true}=28, \qquad
x_{true} = [-10, -15, 20]
\end{split}\end{equation*}
and simulate the deterministic path for time $T$ and record the data $\{y_n \}_{ n=0}^N$.
This is the data we try to match via gradient descent.
After we obtain the data, we forget the true parameters.

We explain how to set the hyper-parameters.
For the constant in the loss function, we set $C=1/150$ according to the discussion in \Cref{s:assi_desp}.
For the optimization, we start from initial guess
\begin{equation*}\begin{split}
\gamma = 33, \quad
x_0=[0,0,0], \quad
x^*_n=[0,0,0] \textnormal{ for all } n, \quad
\sigma = 2, \quad
\gamma' = 0.1.
\end{split}\end{equation*}
Our initial guesses are largely blind, because there is no meaningful guess on the missing dimension by looking a short-time observation; this is even more so since $\gamma$ is also unknown.
In the adjoint path-kernel method, we set $\alpha \equiv 5$, which is larger than the top Lyapunov exponent (for example, at $\gamma=1$, the top Lyapunov exponent is about $1$).
The number of samples used for each update is $L=10$.
The stepsize for the gradient descent is set according to the discussion in \Cref{s:assi_opti}.

We first show results in the case $T=2$; the path is shorter so the problem is easier, but the plots are more clear.
The 2D observation paths at different stages are shown in \Cref{f:assi path T2}.
As we can see, the observation path generated by the inferred SDE gradually matches the data during the optimization.
This shows that our method is correct.
The wall time for each parameter-update on a 3GHz CPU is less than 0.3 second.
We also plot the history of $\Phi^{avg}$ for $L=10$ and $L=100$ in \Cref{f:assi hist T2}.
As we can see, a larger $L$ improves the convergence speed and reduces the fluctuation in the optimization, but the benefit may not worth the cost: this is similar to the general case of stochastic gradient descent.

\begin{figure}[ht] \centering
  \includegraphics[width=0.32\textwidth]{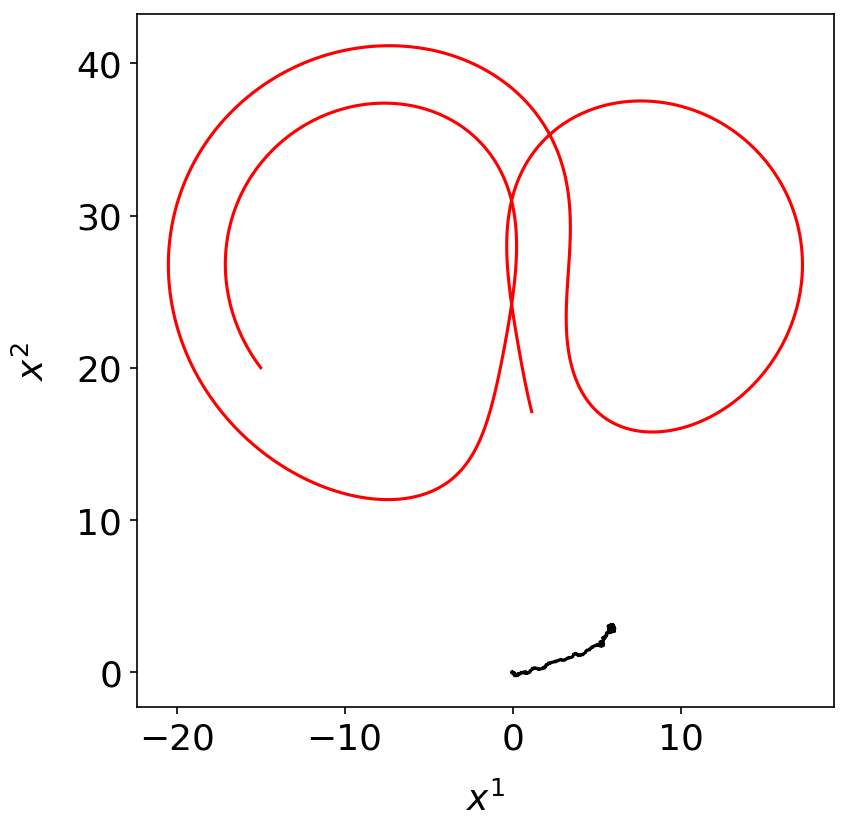}
  \includegraphics[width=0.32\textwidth]{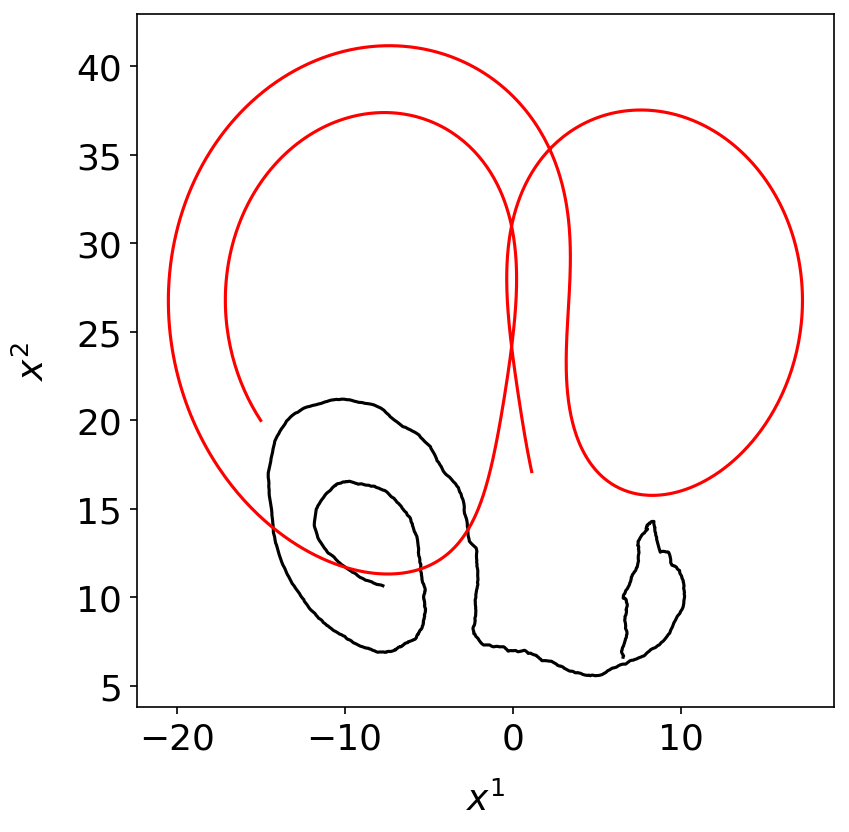}
  \includegraphics[width=0.32\textwidth]{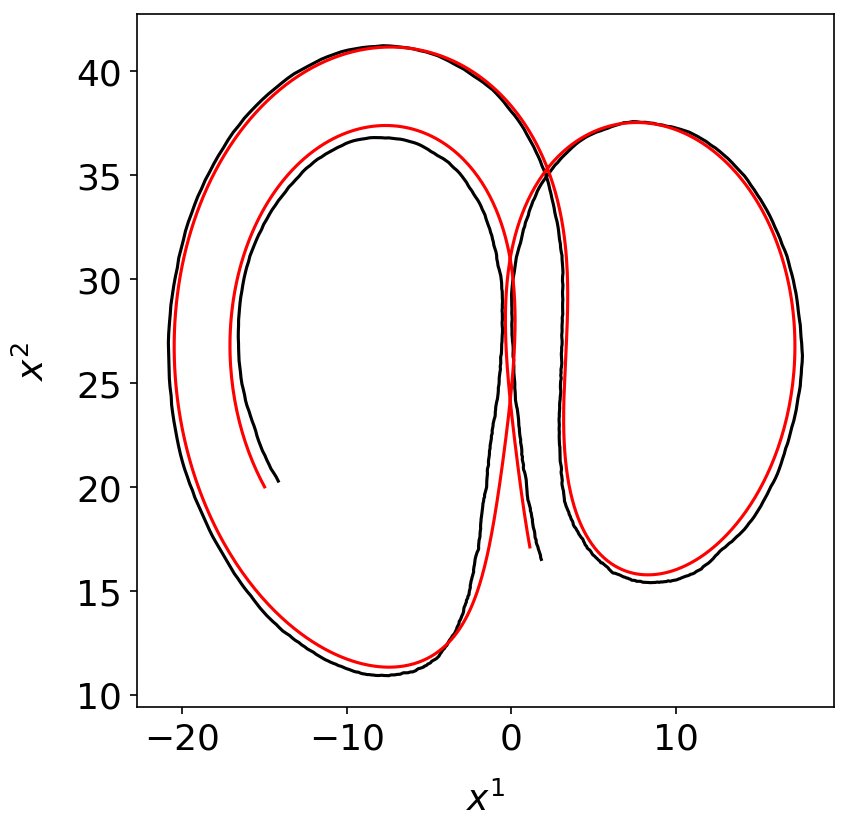}
  \caption{Lorenz 63 example. Comparison of the 2D data (red line) and the observation generated by the 3D model (black line, averaged over $L=10$ samples) at different stages of the optimization.
  From left to right: initial guess, after 21 updates (or `epoch'), and after 291 updates.}
  \label{f:assi path T2}
\end{figure}

\begin{figure}[ht] \centering
  \includegraphics[width=0.48\textwidth]{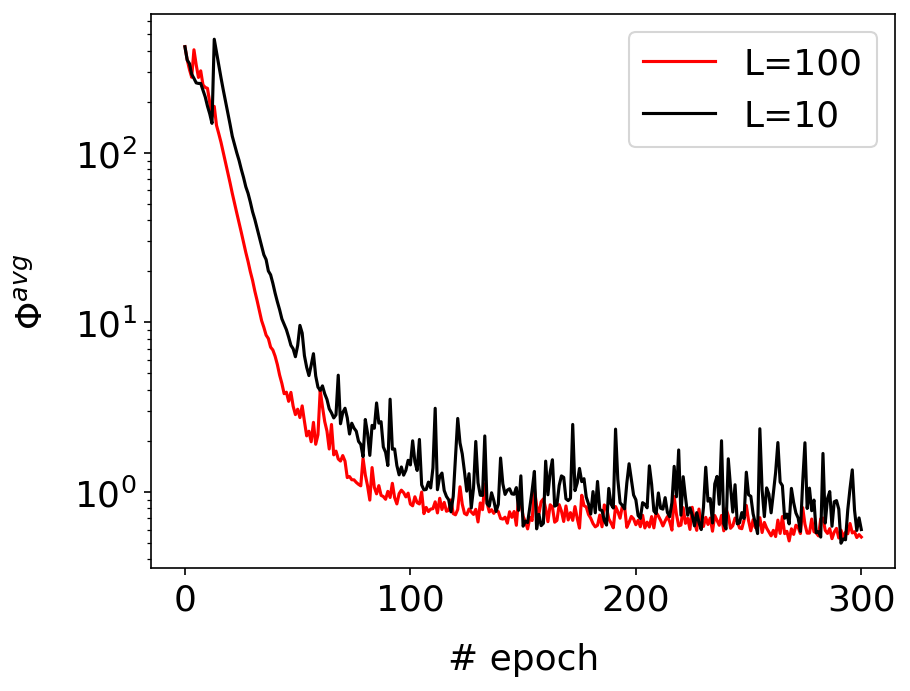}
  \caption{
  $\Phi^{avg}$ vs. number of updates for $T=2$ and different sample size $L$.
  }
  \label{f:assi hist T2}
\end{figure}

Then we show results in the case $T=20$ and $L=10$; the path is longer so the problem is more difficult (some would say that the problem is more `stiff') and the conventional adjoint method becomes unaffordable.
We update the parameter 1000 times by gradient descent.
The 2D observation paths after the optimization is shown in \Cref{f:assi path T20}.
We also plot the history of $\Phi^{avg}$.
This shows that our result can work over a long-time effectively and efficiently.
In contrast, conventional adjoint method is expected to blow up because Lyapunov exponent $\lambda\approx 1$ and $T = 20$, making conventional gradients grow like $e^{\lambda T}\approx 5\times 10^8$; this is exactly what we circumvent by damping with $\alpha$ and the adjoint path-kernel estimator.

\begin{figure}[ht] \centering
  \includegraphics[width=0.50\textwidth]{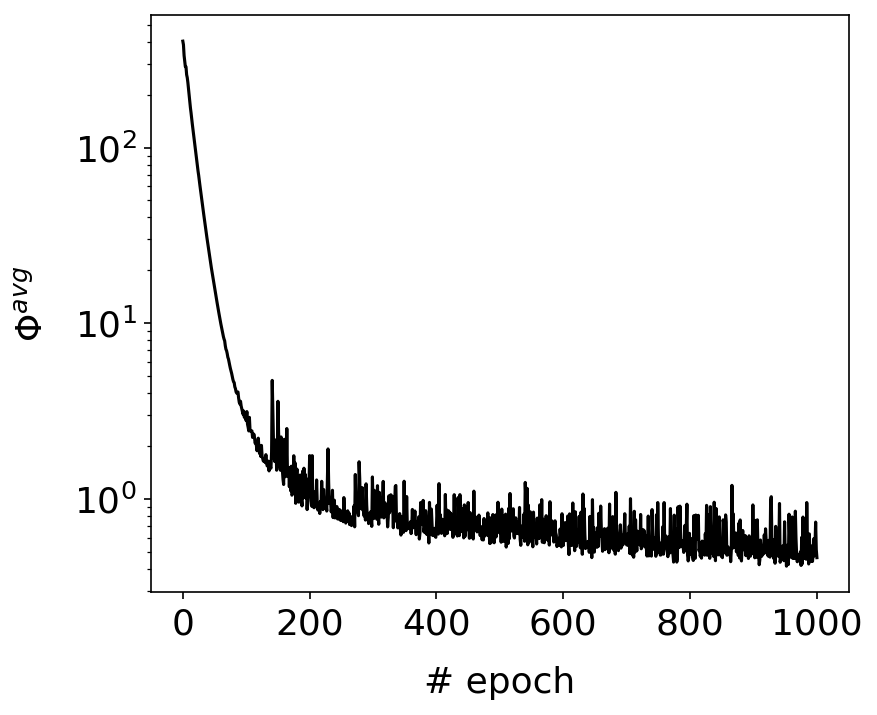}
  \includegraphics[width=0.415\textwidth]{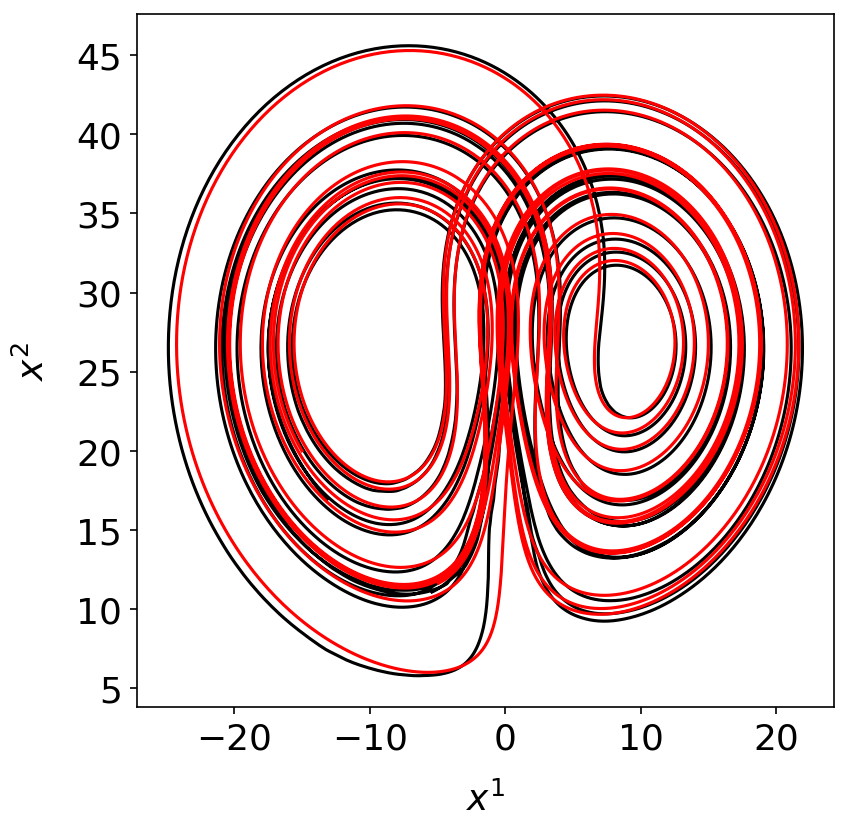}
  \caption{
  Left: $\Phi^{avg}$ history of Lorenz 63, $T=20$.
  Right: data and the sample generated by the deterministic part of the optimized model.
  }
  \label{f:assi path T20}
\end{figure}

At the end of the optimization, $ \sigma = 0.5$ takes the lower bound value.
This verifies that our loss function penalizes noise and guide the optimization towards a deterministic solution.
So we can set $\sigma=0$ and compute only the deterministic part of the inferred dynamics.
This further removes the noise contribution in the loss, and reduces the loss to 
\begin{equation*}\begin{split}
  \Phi = 0.24.
\end{split}\end{equation*}
The inferred deterministic path is in \Cref{f:assi path T20}: this shows that the correction term is large enough to control the chaos.
In the future, we can consider switching to deterministic dynamics and perform further refinement of the optimization.
However, the most difficult step is to first arrive at a good guess, which must be achieved by adjoint path-kernel method.

Finally, we remark that this seemingly simple problem is difficult for other methods.
In the context of data assimilation, previous methods typically avoid optimizing \textit{one} loss function with low-dimensional observation, since the system is unstable and we need to assimilate a long-time observation to infer information about the missing dimensions.
In the context of gradient-based methods, where the gradients are computed by linear response methods, this is also difficult.
The pure kernel-differentiation method does not work because we want to optimize the initial distribution of the path which is singular, and because we want to optimize the noise.
The pure divergence method does not work because of singular initial conditions; moreover, some divergence methods involve approximation on a set of basis, which does not work even in moderately high-dimensional cases such as our next example.
Our divergence-kernel method in \cite{DKlinR} does not work well due to optimizing singular initial conditions, although high-dimension and optimizing noise do not cause problems.
The tangent path-kernel method in \cite{dud} does not work well, since there are too many parameters, most of them in $x^*_n$'s, to optimize.

\subsection{Numerical example: 10D Lorenz 96 system with 8D observations}
\label{s:assi_10D}

We demonstrate our method on a moderately high-dimensional example, a 10D Lorenz 96 system whose equation is
\begin{equation*}\begin{split}
 d x^i
  = \left(x^{i+1}-x^{i-2}\right) x^{i-1} - x^i + \gamma
  + \sigma dB^i + \gamma'|x_t^*-x_t|^2 (x_t^{*i}-x_t^i) dt,
  \quad \textnormal{where}
  \\
  \quad i=0, \ldots, 9;
\end{split}\end{equation*}
We use the forward-Euler scheme to integrate this SDE, with $\Dt=0.002$.
The observation misses two coordinates, that is,
\begin{equation*}\begin{split}
  \phi(x) = [x^0\sim x^3, x^5\sim x^9 ].
\end{split}\end{equation*}
The path of observation is obtained by solving the original Lorenz system with $\sigma=0$ and $\gamma'=0$, and
\begin{equation*}\begin{split}
\gamma_{true}=8, \qquad
x_{true} = [-6.9,-0.5,1.5,9.3,0.9,1.3,0.2,2.6,6.7,2.7].
\end{split}\end{equation*}
We consider the total time $T=5$.

For the hyper-parameters, we set $C=1/150$.
The initial guess is
\begin{equation*}\begin{split}
\gamma = 13, \quad
\sigma = 2, \quad
\gamma' = 0.1, \quad
x_0=0, \quad
x^*_n=0,
\end{split}\end{equation*}
where $0$ is the zero vector.
In the adjoint path-kernel method, we set $\alpha \equiv 3$, which is larger than the top Lyapunov exponent.
The number of samples used for each update is $L=10$.
The stepsize for the gradient descent is set to $\eta=1$ at the start and then $\eta=0.1$ once $\E{\Phi}\le 6$, and also modified according to the discussion in \Cref{s:assi_opti}.
The wall time for each parameter-update on a 3GHz CPU is less than 1 second.

We plot $[x^1, x^2]$ of the observation paths at different stages in \Cref{f:10d assi path}.
As we can see, the observation path generated by the inferred SDE gradually matches the data during the optimization.
Also note that the two coordinates do not have an obvious relation, which, again, implies that we cannot reasonably infer the missing coordinate from adjacent coordinates.
We also plot the history of $\Phi^{avg}$ in \Cref{f:10d assi hist}.
At the end of the optimization, the parameters in the dynamic and the initial condition are
\begin{equation*}\begin{split}
\gamma = 8.0, \quad
x_0=[-6.9,-0.5,1.4,9.3,0.9,1.3,0.2,2.6,6.8,2.7],
\end{split}\end{equation*}
which are basically the same as the true values.
This shows that our result can work over a long-time effectively and efficiently even in high dimensions.

\begin{figure}[ht] \centering
  \includegraphics[width=0.32\textwidth]{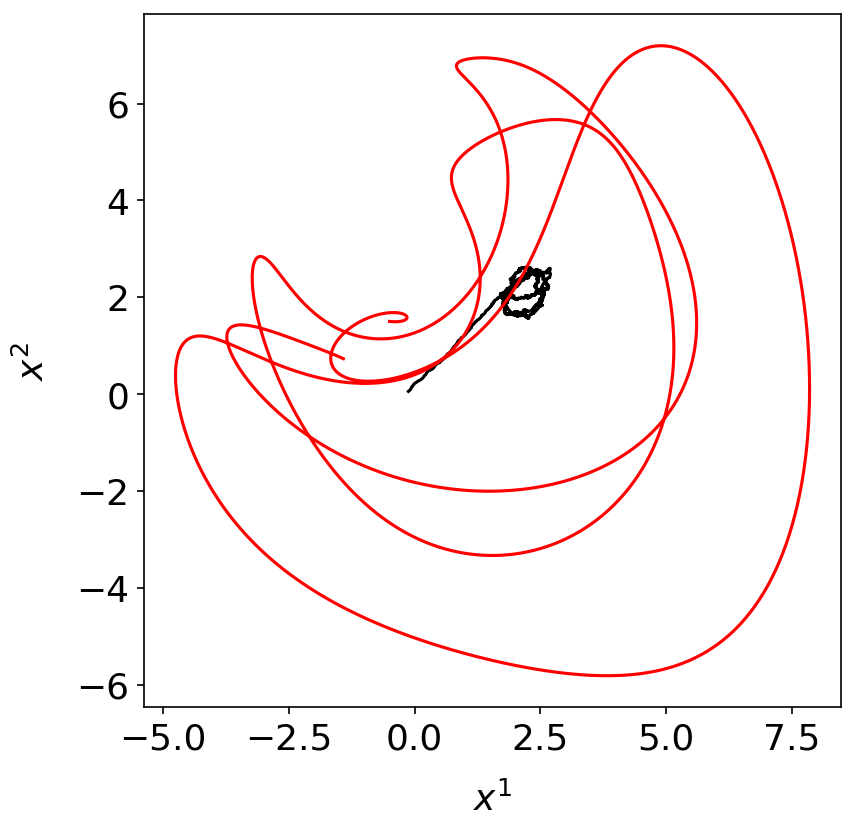}
  \includegraphics[width=0.32\textwidth]{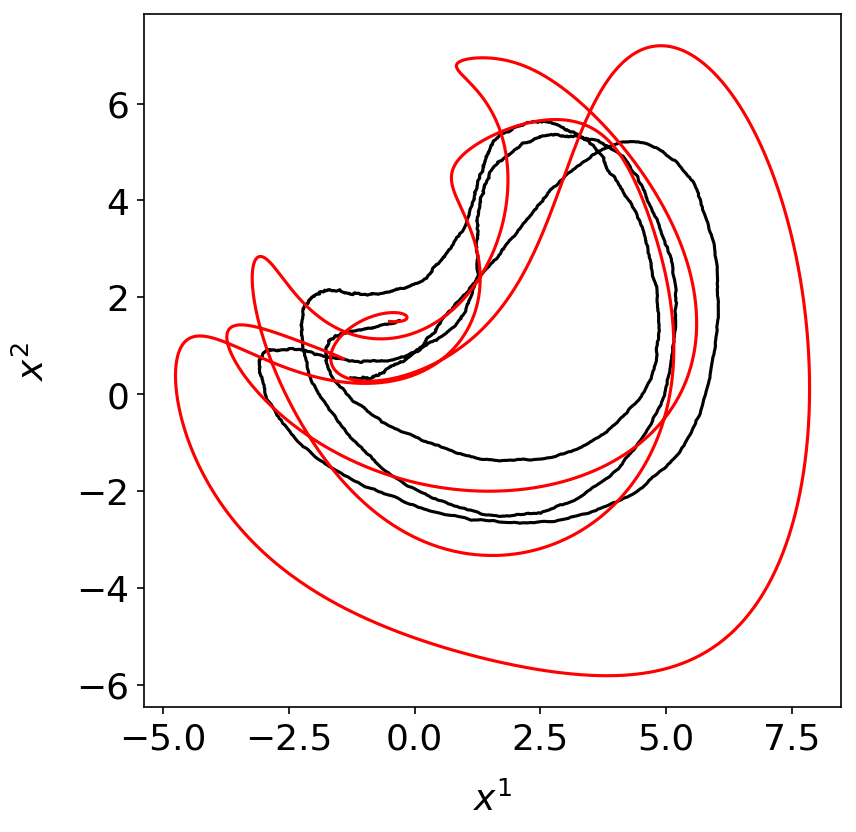}
  \includegraphics[width=0.32\textwidth]{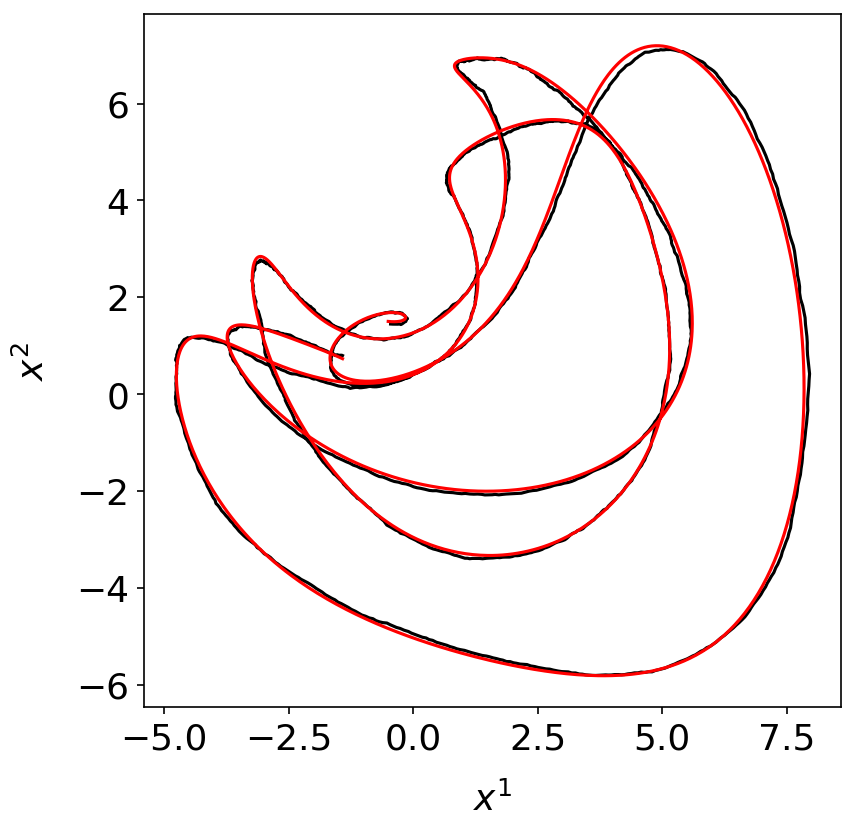}
  \caption{Lorenz 96 example. Comparison of the $[x^1, x^2]$ coordinates of the data (red line) and the generated sample (black line, averaged over $L=10$ sample paths with noise) at different stages of the optimization. 
  From left to right: after 1 update, after 15 updates, after 1922 updates.}
  \label{f:10d assi path}
\end{figure}

\begin{figure}[ht] \centering
  \includegraphics[width=0.50\textwidth]{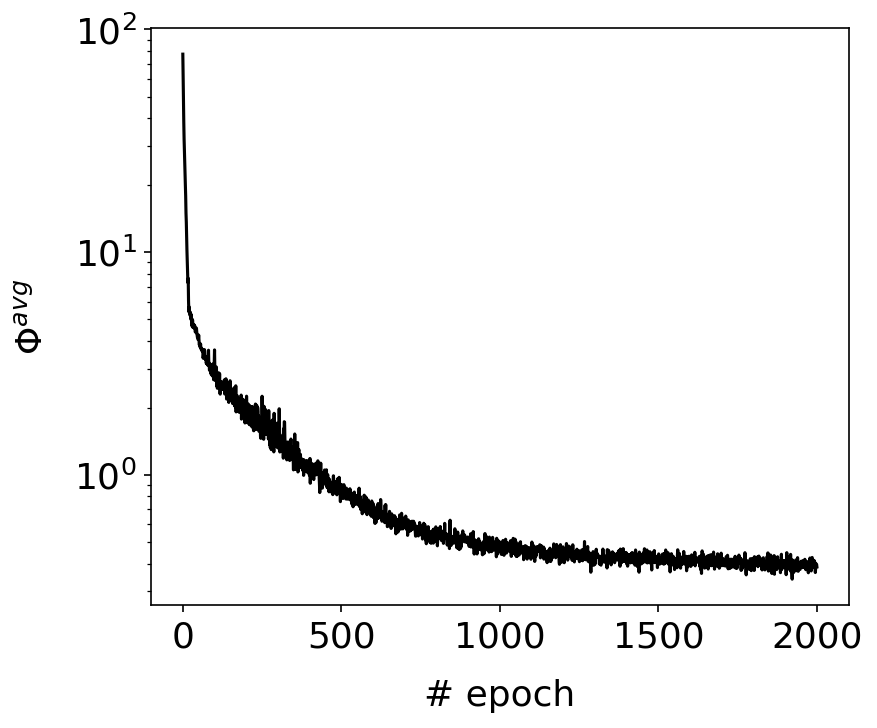}
  \includegraphics[width=0.415\textwidth]{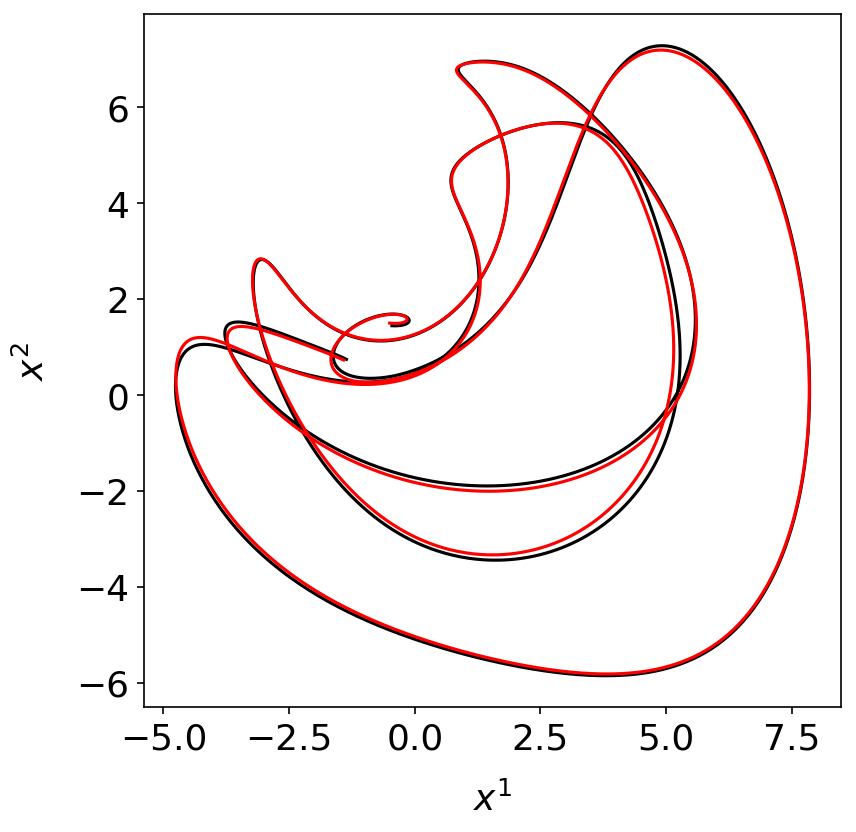}
  \caption{
  Left: $\Phi^{avg}$ vs. number of updates of the 10D assimilation problem.
  Right: data and the sample generated by the deterministic part of the optimized model.
  }
  \label{f:10d assi hist}
\end{figure}

At the end of the optimization, $ \sigma = 0.5$ takes the lower bound value.
Moreover, $\gamma' = 3.8$ is large enough to control the chaos by the correction term, so we can set $\sigma=0$, and only compute the deterministic part of the inferred dynamics.
This reduces the loss to 
\begin{equation*}\begin{split}
  \Phi = 0.050.
\end{split}\end{equation*}
The inferred deterministic path is in \Cref{f:10d assi hist}.

\section*{Acknowledgements}

The author thanks Zhuoyuan Li, Zeju Sun, Pingwen Zhang for helpful discussions.

\section*{Declarations}

\oldsubsection*{Data availability statement}

The code used in this paper is posted at \url{https://github.com/niangxiu/APK}.
There are no other associated data.

\oldsubsection*{Funding and Conflicts of interests/Competing interests}

The author has no relevant conflicting or competing interests.



\bibliographystyle{abbrv}
{\footnotesize\bibliography{library}}

\end{document}